\documentclass{amsart}%
\usepackage{amssymb}
\usepackage{amsmath}
\usepackage{amsfonts}
\usepackage[all,cmtip]{xy}
\usepackage[margin=1.5in]{geometry}
\usepackage{hyperref}
\usepackage{graphicx}%
\setcounter{MaxMatrixCols}{30}

\newtheorem{theorem}{Theorem}
\newtheorem{theoremannounce}{Theorem}
\numberwithin{theorem}{section}
\theoremstyle{plain}
\newtheorem*{acknowledgement}{Acknowledgement}

\newtheorem{definition}[theorem]{Definition}

\newtheorem{lemma}[theorem]{Lemma}

\theoremstyle{remark}

\newtheorem{example}[theorem]{Example}

\newtheorem{observe}{Observation}

\newtheorem{remark}[theorem]{Remark}

\numberwithin{equation}{section}

\begin{document}
\title[Braided categorical groups and associators]{Braided categorical groups and\linebreak strictifying associators}
\author{Oliver Braunling}
\address{Mathematical Institute, University of Freiburg, Ernst-Zermelo-Strasse 1,
79104, Freiburg im Breisgau, Germany}
\thanks{The author was supported by DFG GK1821 \textquotedblleft Cohomological Methods
in Geometry\textquotedblright.}
\keywords{Braided categorical group, Picard groupoid, strictification, skeletalization, associator}

\begin{abstract}
A key invariant of a braided categorical group is its quadratic form,
introduced by Joyal and Street. We show that the categorical group is braided
equivalent to a simultaneously skeletal and strictly associative one if and
only if the polarization of this quadratic form is the symmetrization of a
bilinear form. This generalizes the result of Johnson--Osorno that all Picard
groupoids can simultaneously be strictified and skeletalized, except that in
the braided case there is a genuine obstruction.

\end{abstract}
\maketitle

A braided monoidal category is always braided monoidal equivalent to a
skeletal one, and always braided monoidal equivalent to a strictly associative
one. However, typically it is impossible to achieve both properties
simultaneously; even in the more restrictive symmetric monoidal case. But
Johnson and Osorno \cite{MR2981952} have shown that this problem,
surprisingly, disappears for Picard groupoids, i.e. symmetric categorical
groups. We generalize this to the braided case, except that here one faces an
obstruction. In detail:\medskip

A \emph{braided categorical group} is a braided monoidal category
$(\mathsf{C},\otimes)$ which is additionally a groupoid and for all objects
$X\in\mathsf{C}$, there exists an object $X^{-1}$ with an arrow%
\[
X\otimes X^{-1}\overset{\sim}{\longrightarrow}\mathbf{1}%
\]
to the unit object. The same concept is also called a `braided $Gr$-category'
or `braided weak $2$-group'. If the braiding is symmetric, it is called a
\emph{Picard groupoid}.

Joyal and Street \cite{MR1250465} give a classification of braided categorical
groups up to braided monoidal equivalences. They attach to any such an abelian
$3$-cocycle in $H_{ab}^{3}(\pi_{0}\mathsf{C},\pi_{1}\mathsf{C})$.
Eilenberg--Mac\thinspace Lane canonically identify the latter group with
quadratic forms%
\begin{equation}
\operatorname*{tr}:H_{ab}^{3}(G,M)\overset{\cong}{\longrightarrow
}\operatorname*{Quad}(G,M)\text{.} \label{lEMMa}%
\end{equation}
This isomorphism holds for arbitrary abelian groups $G,M$. The subtle
differences between symmetric bilinear forms and quadratic forms cause some
intricacies here unless $2$ is invertible, and this issue lies at the heart of
the entire matter. We call a quadratic form \emph{polar} if its polarization,
i.e. the symmetric bilinear form%
\[
b(x,y):=q(x+y)-q(x)-q(y)\text{,}%
\]
is of the shape $t(x,y)+t(y,x)$ for some bilinear form $t:G\times G\rightarrow
M$ (where $t$ is not required to be symmetric or non-degenerate in any way).
Our main result is as follows.

\begin{theoremannounce}
A braided categorical group $(\mathsf{C},\otimes)$ is braided monoidal
equivalent to one which is simultaneously strictly associative and skeletal if
and only if its quadratic form is polar.
\end{theoremannounce}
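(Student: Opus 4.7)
The plan is to use the Joyal--Street classification (\ref{lEMMa}) to translate the statement into a purely algebraic question about $\mathbb{Z}$-bilinear forms on $G := \pi_0\mathsf{C}$ with values in $M := \pi_1\mathsf{C}$, exhibiting the polar condition as precisely the image of the ``trace of a bilinear form'' operation inside $\operatorname{Quad}(G,M)$.

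For necessity, I would start from a skeletal strictly associative representative, whose set of objects is $G$, whose tensor product is the group law and whose associator is the identity. With the associator eliminated, the two hexagon axioms collapse into the identities $c(x+y,z)=c(x,z)+c(y,z)$ and $c(x,y+z)=c(x,y)+c(x,z)$ on the braiding $c\colon x\otimes y\to y\otimes x$ viewed as an element of $M=\operatorname{Aut}(x+y)$, so $c$ is $\mathbb{Z}$-bilinear. Since the Joyal--Street quadratic form is then $q(x)=c(x,x)$, its polarization is $b(x,y)=c(x,y)+c(y,x)$, manifestly the symmetrization of $c$, and $q$ is polar.

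For sufficiency, I would write $b(x,y)=t(x,y)+t(y,x)$ and take $c_0:=t$ as a first approximation to the braiding. Measuring the deviation by $\delta(x):=q(x)-c_0(x,x)$, a short computation using the polar relation shows that $\delta$ is additive; comparing $q(-x)=q(x)$ with $c_0(-x,-x)=c_0(x,x)$ forces $2\delta=0$, so $\delta\in\operatorname{Hom}(G,M[2])$. The problem then reduces to the following technical lemma: every homomorphism $\delta\colon G\to M[2]$ arises as the diagonal $x\mapsto s(x,x)$ of some antisymmetric bilinear form $s\colon G\times G\to M$. Granting this, $c:=c_0+s$ is bilinear with $c(x,x)=q(x)$, and the abelian $3$-cocycle $(0,c)$ encodes a skeletal strictly associative model realizing $q$.

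The construction of $s$ is the main obstacle. This is essentially the technical core of Johnson--Osorno's strictification theorem for Picard groupoids, where the polarization vanishes automatically and one realizes $\delta=q$ directly; I would either invoke their result or reprove it, first for finitely generated $G$ by choosing a basis $\{e_i\}$, placing $\delta(e_i)$ on the diagonal of $s$ (which yields the correct diagonal formula since $x_i^2\equiv x_i\pmod{2}$ and $\delta(e_i)\in M[2]$), and solving a small linear system for the off-diagonal entries so that $s$ descends through the defining relations of $G$; the general case then follows by passage to the limit over finitely generated subgroups.
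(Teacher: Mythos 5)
Your necessity direction is exactly the paper's argument: pass to the skeletal model $\mathcal{T}(\pi_0\mathsf{C},\pi_1\mathsf{C},(h,c))$, note that strict associativity plus skeletality forces $h=0$ (Remark \ref{rmk_StrictlyAssociativeAndSkeletal}), so the hexagon identities make $c$ bilinear and $q(x)=c(x,x)$ is polar with $t=c$. Your sufficiency reduction is also the paper's: write $b=t+t^{\mathrm{op}}$, set $\delta(x):=q(x)-t(x,x)$, and observe $\delta\in\operatorname{Hom}(G,{}_{2}M)$ (this is Lemma \ref{lemma_CharPolar}, resting on the exact row of Lemma \ref{lemma_WhiteheadExactness}); what remains is to realize $\delta$ as the diagonal of a bilinear form so that $(0,t+s)$ is an abelian $3$-cocycle with trace $q$.

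The divergence, and the one genuine gap, is in how you establish that last lemma. Your ``reprove it'' branch does not go through as written: a finitely generated abelian group has no basis unless it is free (you would need a cyclic decomposition), and, more seriously, the ``passage to the limit over finitely generated subgroups'' is not a proof --- the forms you build on the various finitely generated subgroups need not be compatible under inclusion, and extending a bilinear form with prescribed diagonal from a subgroup to a larger group is essentially the original problem again; existence on each piece does not produce an element of the inverse limit of solution sets. The paper's Theorem \ref{thm_Main} avoids all of this by exploiting that $\delta$ factors through $G/2G$ with values in ${}_{2}M$: choose an $\mathbb{F}_2$-basis $(\beta_i)$ of $G/2G$ and set $s(x,y):=\sum_i\overline{x}_i\overline{y}_i\,\delta(\beta_i)$, which is bilinear because the values are $2$-torsion and has diagonal $\delta$ because $\overline{x}_i^{\,2}=\overline{x}_i$ in $\mathbb{F}_2$; this works for arbitrary $G$ in one stroke, no limits needed, and that is the repair your sketch requires. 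Your other branch --- invoking Johnson--Osorno to realize $\delta$ (a quadratic form with zero polarization) by a skeletal permutative model and reading off the bilinear $c$ --- is logically sound for the statement as posed, but it inverts the paper's architecture, which obtains Johnson--Osorno as a corollary of the explicit cocycle $(0,c)$ rather than as an input.
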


See Theorem \ref{thm_StrictlyAssocSkeletalization}. This generalizes the
following result and reproves it in a different way.

\begin{theoremannounce}
[Johnson--Osorno \cite{MR2981952}]Every Picard groupoid $(\mathsf{C},\otimes)$
is symmetric monoidal equivalent to one which is simultaneously strictly
associative and skeletal.
\end{theoremannounce}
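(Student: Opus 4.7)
The strategy is to deduce this directly from the main theorem. A Picard groupoid is exactly a braided categorical group whose braiding happens to be symmetric, so the Johnson--Osorno statement reduces to showing that the quadratic form of any Picard groupoid is polar. The symmetry relation $c_{Y,X}\circ c_{X,Y}=\mathrm{id}$, specialized to $X=Y$, gives $c_{X,X}^{2}=\mathrm{id}_{X\otimes X}$; under the identification $\operatorname{Aut}(X\otimes X)\cong M:=\pi_{1}\mathsf{C}$ used to read off $q$, this becomes $2q(x)=0$ in $M$. Consequently the polarization
\[
b(x,y)=q(x+y)-q(x)-q(y)
\]
satisfies $b(x,x)=q(2x)-2q(x)=4q(x)-2q(x)=2q(x)=0$, so $b$ is alternating. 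Combined with the symmetry of $b$ inherent to any polarization, this forces $2b(x,y)=0$ for all $x,y\in G$, where $G:=\pi_{0}\mathsf{C}$.

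Since $b$ vanishes on $2G$ in each argument, it descends to a bilinear pairing $\bar{b}\colon G/2G\times G/2G\to M[2]$ on the $\mathbb{F}_{2}$-vector space $G/2G$. Using the Axiom of Choice, pick a Hamel basis $\{e_{i}\}_{i\in I}$ of $G/2G$ and well-order the index set $I$. Define a form $\bar{t}$ by
\[
\bar{t}(e_{i},e_{j})=
\begin{cases}
\bar{b}(e_{i},e_{j}) & \text{if } i<j,\\
0 & \text{otherwise,}
\end{cases}
\]
and extend $\mathbb{F}_{2}$-bilinearly. The identity $\bar{t}(x,y)+\bar{t}(y,x)=\bar{b}(x,y)$ then holds on all basis pairs: off the diagonal by the symmetry of $\bar{b}$, and on the diagonal because both sides vanish by the alternating property. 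Pulling $\bar{t}$ back along $G\twoheadrightarrow G/2G$ yields a bilinear form $t\colon G\times G\to M$ with $t(x,y)+t(y,x)=b(x,y)$, witnessing that $q$ is polar. The main theorem then produces a simultaneously strict and skeletal braided monoidal model, and the equivalence preserves symmetry of the braiding, so the resulting model is a Picard groupoid.

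The only external input needed is the identification $c_{X,X}\leftrightarrow q(X)$ under the trace isomorphism (\ref{lEMMa}), i.e.\ the convention that makes a symmetric braiding correspond to $2q=0$; this is classical Eilenberg--Mac Lane and is presumably already established in the body of the paper before this theorem is invoked. Otherwise the argument is essentially forced: symmetry cuts $q$ down to $2$-torsion, $2b=0$ turns polarity into a question about $\mathbb{F}_{2}$-vector spaces, and on such spaces the upper-triangular part of $\bar{b}$ in any well-ordered basis provides the required $\bar{t}$. I do not anticipate a substantive obstacle beyond confirming that the strict-skeletal model furnished by the main theorem inherits symmetry of the braiding without modification.
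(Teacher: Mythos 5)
Your proposal is correct, but it takes a noticeably longer route than the paper, and in doing so it misses the one observation that makes the paper's deduction a one-liner. You only use the \emph{diagonal} instance of symmetry, $s_{X,X}\circ s_{X,X}=\operatorname{id}$, which gives $2q=0$, and from there you derive that $b$ is alternating with $2b=0$ and run an upper-triangular basis construction on $G/2G$ (essentially the trick of Lemma \ref{lemma_FormsOnFreeAbelianGroupArePolar}, transported to the $\mathbb{F}_2$-vector space $G/2G$). All of these steps check out: $q(2x)=4q(x)$, the descent of $b$ to $G/2G\times G/2G\rightarrow\left. _2M\right.$, and the well-ordered basis argument are each valid, and the final appeal to the fact that a braided monoidal equivalence transports symmetry of the braiding is also fine. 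But the \emph{full} symmetry $s_{Y,X}\circ s_{X,Y}=\operatorname{id}$ translates in the skeletal model into the symmetric cocycle condition $c(x,y)=-c(y,x)$ of Equation \ref{lx_b_1}, and by Lemma \ref{lemma_v3} the polarization of the trace is exactly $W(x,y)=c(x,y)+c(y,x)$, which therefore vanishes \emph{identically}. This is what the paper uses: $b\equiv 0$, so $t(x,y):=0$ witnesses polarity with no basis choice, no well-ordering, and no descent to $G/2G$. Your weaker hypotheses do buy something slightly more general (any braided categorical group whose quadratic form is $2$-torsion is polar, hence strictly-associatively skeletalizable), but for the Picard case your construction produces a $t$ that is secretly forced to satisfy $t(x,y)+t(y,x)=b(x,y)=0$ anyway, so the extra machinery is doing no work. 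In short: correct, but you reproved a special case of your own Lemma-\ref{lemma_FormsOnFreeAbelianGroupArePolar}-style argument where the paper simply notes the polarization is zero.
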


This follows from our main result since for Picard groupoids the polarization
always vanishes, so one can just take $t(x,y):=0$. We give explicit examples
of non-polar forms. Even in the non-polar situation, one can always add new
objects to the category to make it polar.

\begin{theoremannounce}
For every braided categorical group $(\mathsf{C},\otimes)$, there exists an
essentially surjective and faithful (but typically not full) braided monoidal
functor from another braided categorical group%
\[
(\widehat{\mathsf{C}},\otimes)\longrightarrow(\mathsf{C},\otimes)\text{,}%
\]
surjective on $\pi_{0}$, and an isomorphism on $\pi_{1}$, such that
$(\widehat{\mathsf{C}},\otimes)$ is simultaneously strictly associative and
skeletal. We call $(\widehat{\mathsf{C}},\otimes)$ a \emph{polar cover} of
$(\mathsf{C},\otimes)$.
\end{theoremannounce}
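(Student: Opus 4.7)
The plan is to reduce the statement to the main theorem by replacing $\pi_{0}\mathsf{C}$ with a free abelian group, exploiting the fact that every quadratic form on a free abelian group is automatically polar.

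First I would pick a surjection $f\colon F\twoheadrightarrow\pi_{0}\mathsf{C}$ with $F$ free abelian; for concreteness, take $F=\mathbb{Z}[\pi_{0}\mathsf{C}]$, the free abelian group on the underlying set of $\pi_{0}\mathsf{C}$. Let $(\alpha,c)\in Z_{\mathrm{ab}}^{3}(\pi_{0}\mathsf{C},\pi_{1}\mathsf{C})$ represent the Joyal--Street class of $\mathsf{C}$. Pulling back along $f$ yields $(f^{*}\alpha,f^{*}c)\in Z_{\mathrm{ab}}^{3}(F,\pi_{1}\mathsf{C})$, and I would let $\widehat{\mathsf{C}}'$ denote the braided categorical group it classifies. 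By construction $\pi_{0}\widehat{\mathsf{C}}'=F$, $\pi_{1}\widehat{\mathsf{C}}'=\pi_{1}\mathsf{C}$, and the quadratic form of $\widehat{\mathsf{C}}'$ is $q\circ f$, where $q$ is the quadratic form of $\mathsf{C}$.

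The crucial point is that $q\circ f$ is polar. Fixing a total order on a basis $\{e_{i}\}$ of $F$, and letting $b$ denote the polarization of $q\circ f$, I would define a bilinear form $t$ on $F$ by $t(e_{i},e_{j}):=b(e_{i},e_{j})$ for $i<j$, $t(e_{i},e_{i}):=(q\circ f)(e_{i})$, and $t(e_{i},e_{j}):=0$ for $i>j$, then extend bilinearly. A direct check on basis pairs, using $b(e_{i},e_{i})=2(q\circ f)(e_{i})$, gives $b(x,y)=t(x,y)+t(y,x)$ for all $x,y\in F$. The main theorem then guarantees that $\widehat{\mathsf{C}}'$ is braided monoidal equivalent to a simultaneously strictly associative and skeletal braided categorical group $\widehat{\mathsf{C}}$.

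To produce the functor, I would choose a skeletal model $\mathsf{C}_{\mathrm{sk}}$ of $\mathsf{C}$ carrying the cocycle $(\alpha,c)$. The pair $(f,\mathrm{id}_{\pi_{1}\mathsf{C}})$ then induces a braided monoidal functor $\widehat{\mathsf{C}}'\to\mathsf{C}_{\mathrm{sk}}$ sending the object labeled by $g'\in F$ to the one labeled by $f(g')$ and acting as the identity on each automorphism group. Composing with the equivalences $\widehat{\mathsf{C}}\simeq\widehat{\mathsf{C}}'$ and $\mathsf{C}_{\mathrm{sk}}\simeq\mathsf{C}$ yields the desired functor $\widehat{\mathsf{C}}\to\mathsf{C}$. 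Surjectivity of $f$ delivers surjectivity on $\pi_{0}$ and essential surjectivity; the $\pi_{1}$-isomorphism holds by construction; faithfulness follows because the intermediate skeletal functor is the identity on each (nonempty) hom-set and faithfulness is preserved by the equivalences; and fullness fails as soon as $\ker f\neq 0$, since distinct elements of $F$ with the same image admit no morphism in the skeletal $\widehat{\mathsf{C}}$ but plenty in $\mathsf{C}$. The main obstacle I anticipate is the careful bookkeeping in this last step: verifying that the cocycle-level pullback genuinely assembles into a braided monoidal functor with the correct coherence isomorphisms, and that postcomposing with the strictification equivalence supplied by the main theorem does not spoil faithfulness. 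The polarity argument and the appeal to the main theorem itself are formal.
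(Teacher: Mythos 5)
Your proposal is correct and follows essentially the same route as the paper: surject onto $\pi_{0}\mathsf{C}$ from a free abelian group, note that the lifted quadratic form $q\circ f$ is automatically polar (your basis-ordering construction is exactly Lemma \ref{lemma_FormsOnFreeAbelianGroupArePolar}), invoke the main theorem to replace the cocycle by one with trivial associator, and compose with the Joyal--Street equivalences, checking essential surjectivity, faithfulness and failure of fullness just as the paper does. The only cosmetic difference is that you realize the comparison functor explicitly via the pullback cocycle $(f^{*}\alpha,f^{*}c)$ with identity coherence data, whereas the paper obtains the corresponding functor abstractly from fullness of the classification functor $T$ in Theorem \ref{thm_JSClassif}; both are valid.
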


See Theorem \ref{thm_PolarCover}. This construction is not canonical. It has
occasionally been asked\footnote{\textquotedblleft But I had difficulty seeing
in an explicit way how to get an associator and braiding from a quadratic
form.\textquotedblright\ \cite{BAEZPOST}. Quinn \cite[\S 2.5]{MR1734419} or
Galindo--Jaramillo \cite[\S 4.4]{MR3605649} discuss $G$ finite and $M=U(1)$.
Various further results exist in the literature, mostly for $G$ finite and $M$
inside $\mathbb{C}^{\times}$, Wall \cite{MR156890}, Durfee \cite{MR480333}.}
how to find the commutativity constraint explicitly if one is only given the
quadratic form. We answer this in the polar case.

\begin{theoremannounce}
Let $G,M$ be abelian groups. Let $(\beta_{i})_{i\in I}$ be a basis of the
$\mathbb{F}_{2}$-vector space $G/2G$. Let $q\in\operatorname*{Quad}(G,M)$ be a
polar quadratic form and pick a $t:G\times G\rightarrow M$ such that
$b(x,y)=t(x,y)+t(y,x)$. Then%
\[
\overline{q}(x):=q(x)-t(x,x)
\]
defines an element $\overline{q}\in\operatorname*{Hom}_{\mathbb{Z}}(G/2G,M)$
and each $\overline{q}(\beta_{i})$ is $2$-torsion in $M$. Define the abelian
$3$-cocycle $(h,c)\in H_{ab}^{3}(G,M)$ with%
\begin{align*}
h(x,y,z)  &  :=0\\
c(x,y)  &  :=t(x,y)+\sum_{i\in I}\overline{x}_{i}\cdot\overline{y}_{i}%
\cdot\overline{q}(\beta_{i})\text{,}%
\end{align*}
where $\overline{x},\overline{y}$ are the vectors we get under the quotient
map $G\twoheadrightarrow G/2G$, spelled out with respect to the basis
$(\beta_{i})$. This means that $\overline{x}_{i},\overline{y}_{i}\in
\mathbb{F}_{2}$. Then $(h,c)$ maps to $q$ under the isomorphism of Equation
\ref{lEMMa}.
\end{theoremannounce}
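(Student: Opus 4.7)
The plan proceeds in three stages: first I would verify the stated algebraic properties of $\bar{q}$, then check that the pair $(0,c)$ is a genuine Eilenberg--Mac\thinspace Lane abelian $3$-cocycle, and finally compute its image under the trace map of Equation \ref{lEMMa}.

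For the first stage, additivity of $\bar{q}$ comes from a direct expansion:
\[
\bar{q}(x+y) - \bar{q}(x) - \bar{q}(y) = b(x,y) - t(x,y) - t(y,x),
\]
where the polar hypothesis $b(x,y) = t(x,y) + t(y,x)$ forces the right-hand side to vanish. To see that $\bar{q}$ takes values in the $2$-torsion subgroup $M[2]$, I would compute $\bar{q}(2x)$ in two ways: by additivity it equals $2\bar{q}(x)$, while the definition together with the quadratic identity $q(2x) = 4q(x)$ and bilinearity of $t$ gives $4\bar{q}(x)$. Subtracting yields $2\bar{q}(x) = 0$. Being an additive map with $2$-torsion values, $\bar{q}$ automatically vanishes on $2G$ and descends to an $\mathbb{F}_2$-linear map $\bar{q}\colon G/2G \to M[2]$; in particular each $\bar{q}(\beta_i)$ is $2$-torsion in $M$, giving the two assertions about $\bar{q}$.

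Second, each coordinate map $x \mapsto \bar{x}_i$ is a $\mathbb{Z}$-linear functional $G \twoheadrightarrow G/2G \to \mathbb{F}_2$, and since $\bar{q}(\beta_i) \in M[2]$ carries a well-defined $\mathbb{F}_2$-action, the map $(x,y) \mapsto \bar{x}_i \bar{y}_i \bar{q}(\beta_i)$ is $\mathbb{Z}$-bilinear. Summing over $i$ and adding $t(x,y)$ shows $c\colon G \times G \to M$ is $\mathbb{Z}$-bilinear. When $h \equiv 0$, the cocycle condition on $h$ is trivial and the two hexagon identities defining an Eilenberg--Mac\thinspace Lane abelian $3$-cocycle reduce precisely to the left and right bilinearity of $c$, so $(0,c)$ is a valid normalized abelian $3$-cocycle.

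Third, the trace map of Equation \ref{lEMMa} sends an abelian $3$-cocycle $(h,c)$ to the quadratic form $x \mapsto c(x,x)$. Evaluating,
\[
c(x,x) = t(x,x) + \sum_{i \in I} \bar{x}_i^2\, \bar{q}(\beta_i) = t(x,x) + \sum_{i \in I} \bar{x}_i\, \bar{q}(\beta_i),
\]
using $\bar{x}_i^2 = \bar{x}_i$ in $\mathbb{F}_2$. By $\mathbb{F}_2$-linearity of $\bar{q}\colon G/2G \to M[2]$ applied to the expansion $\bar{x} = \sum_i \bar{x}_i \beta_i$, the remaining sum equals $\bar{q}(x)$, whence $c(x,x) = t(x,x) + \bar{q}(x) = q(x)$ by definition of $\bar{q}$. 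The one point requiring real care is fixing the correct sign and normalization conventions for $\operatorname{tr}$; once the standard formula $\operatorname{tr}(h,c)(x) = c(x,x)$ has been confirmed in the conventions of the paper, the remainder of the argument is bookkeeping, and this is what I expect to be the main obstacle should those conventions differ.
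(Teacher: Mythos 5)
Your proposal is correct, and its overall skeleton (properties of $\overline{q}$, cocycle check for $(0,c)$, trace computation) matches the paper's, but the way you close the argument is genuinely more direct than the paper's proof of Theorem \ref{thm_Main}. The paper establishes that $\overline{q}$ is additive and $2$-torsion valued by invoking Lemma \ref{lemma_CharPolar}, which itself leans on the exactness of the middle row of Lemma \ref{lemma_WhiteheadExactness}; and to show the trace $Q(x):=c(x,x)$ equals $q$, it compares the polarizations of $Q$ and $q$, uses that same exactness to conclude $Q-q$ lies in $\operatorname{Hom}_{\mathbb{F}_{2}}(G/2G,\left. _{2}M\right. )$, and then checks vanishing on the basis vectors $\beta_{j}$. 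You instead verify everything by hand: additivity of $\overline{q}$ from the polar identity, $2\overline{q}=0$ by computing $\overline{q}(2x)$ two ways (your unproved use of $q(2x)=4q(x)$ is fine; it follows from $b(x,x)=2q(x)$ and is the same identity the paper cites near Equation \ref{lmefa}), and then $c(x,x)=t(x,x)+\sum_{i}\overline{x}_{i}\overline{q}(\beta_{i})=t(x,x)+\overline{q}(x)=q(x)$ directly by $\mathbb{F}_{2}$-linearity of $\overline{q}$, with no detour through polarizations or the exact sequence. What your route buys is a shorter, self-contained verification; what the paper's route buys is reuse of the Whitehead exact sequence machinery that it needs anyway elsewhere (e.g.\ for the basis-independence statement in Theorem \ref{thm_Main}, which the announced version you were given does not ask for, and in Theorems \ref{thm_StrictlyAssocSkeletalization} and \ref{thm_PolarCover}). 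Your final caveat about conventions is moot: the paper's Theorem \ref{thm_EMDeepIso} fixes $\operatorname{tr}(h,c)(x)=c(x,x)$, exactly as you assumed.
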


Note that while \cite{MR2981952} abstractly proves that Picard groupoids can
be made strictly associative and skeletal by showing that the symmetric
$3$-cohomology class must have a representative with trivial associator, the
above provides an explicit $3$-cocycle having this property.\medskip

When preparing this text, I sometimes got stuck because in a lot of literature
many little (and often admittedly harmless) verifications are left as
\textsl{Exercises}, especially in the older literature, where perhaps most of
this was considered folklore. I found this a little inconvenient, so this text
is written in a quite self-contained way, providing details for many of these
customary omissions.

\begin{acknowledgement}
I thank Niles Johnson and Brad Drew for helpful correspondence.
\end{acknowledgement}

\section{Braided categorical groups}

\begin{definition}
A \emph{braided categorical group} is a braided monoidal category
$(\mathsf{C},\otimes)$ which is additionally a groupoid and for every object
$X\in\mathsf{C}$, we are given an object $X^{-1}$ with an arrow%
\begin{equation}
\varepsilon_{X}:X\otimes X^{-1}\overset{\sim}{\longrightarrow}\mathbf{1}
\label{lefmu7a}%
\end{equation}
to the unit object. Alternative names are: braided $Gr$-categories, braided
weak $2$-groups.
\end{definition}

It may appear more natural to demand the existence of an inversion functor%
\begin{equation}
(-)^{-1}:\mathsf{C}^{op}\longrightarrow\mathsf{C} \label{lefmu7b}%
\end{equation}
sending $X$ to $X^{-1}$ functorially. The definition above does not a priori
guarantee any functoriality in the formation of inverses. However, Laplaza has
shown that in any categorical group the inverses $X^{-1}$ necessarily extend
uniquely to give a functor as in Equation \ref{lefmu7b}, \cite[4.3
Proposition]{MR723395}. This is also discussed in \cite{MR2068521}.

\begin{definition}
If the braiding%
\[
s_{X,Y}:X\otimes Y\overset{\sim}{\longrightarrow}Y\otimes X
\]
is symmetric, i.e. $s_{Y,X}\circ s_{X,Y}=\operatorname*{id}_{X\otimes Y}$ for
all $X,Y\in\mathsf{C}$, then $(\mathsf{C},\otimes)$ is called a \emph{Picard
groupoid}.
\end{definition}

The definition of braided categorical groups goes back to Joyal and\ Street
\cite[\S 3]{MR1250465}.

Given braided categorical groups $\mathsf{P},\mathsf{P}^{\prime}$, we write
$\operatorname*{Hom}^{\otimes}(\mathsf{P},\mathsf{P}^{\prime})$ for the
category of braided monoidal functors from $\mathsf{P}$ to $\mathsf{P}%
^{\prime}$: Objects are functors $\mathsf{P}\rightarrow\mathsf{P}^{\prime}$
preserving the monoidal structure along with the braiding and associativity
constraints; morphisms (usually called \textquotedblleft
homotopies\textquotedblright) are natural transformations between such
functors, which however also need to be compatible with the braided monoidal
structure. A fully spelled out definition of braided monoidal functors is
given for example in \cite[\S 6, take $G=1$]{MR2293318}.

\begin{remark}
[Notation]Some variation in language exists. In Deligne's work \cite[\S 4.1]%
{MR902592} Picard groupoids are called `commutative Picard categories', while
in \cite[\S 2.1]{MR2842932} merely `Picard categories'. Monoidal categories
are often called tensor categories instead.
\end{remark}

\subsection{General skeletalization and strictification methods}

A braided categorical group $(\mathsf{C},\otimes)$ has three key invariants:

\begin{enumerate}
\item $\pi_{0}(\mathsf{C},\otimes)$, the set of isomorphism classes of
objects; an abelian group,

\item $\pi_{1}(\mathsf{C},\otimes)$, the automorphism group of the unit object
$\mathbf{1}_{\mathsf{C}}$; another abelian group,

\item the map $q:\pi_{0}(\mathsf{C},\otimes)\rightarrow\pi_{1}(\mathsf{C}%
,\otimes)$ which assigns to any object $X\in\mathsf{C}$ its signature%
\[
q([X]):=s_{X,X}\otimes X^{-1}\otimes X^{-1}\text{,}%
\]
which is an automorphism of the unit $\mathbf{1}_{\mathsf{C}}$. Here $s_{X,X}$
denotes the braiding%
\[
s_{X,X}:X\otimes X\overset{\sim}{\longrightarrow}X\otimes X\text{.}%
\]
This is an automorphism of $X\otimes X$, and along the contractions
$\varepsilon_{X}$ of Equation \ref{lefmu7a} induces one of $\mathbf{1}%
_{\mathsf{C}}$, thus giving an element of $\pi_{1}(\mathsf{C},\otimes)$.
\end{enumerate}

There are particularly nice types of braided categorical groups:

\begin{definition}
A category $\mathsf{C}$ is called \emph{skeletal} if for any isomorphic
objects $X,X^{\prime}\in\mathsf{C}$ we must have $X=X^{\prime}$, i.e.%
\[
X\simeq X^{\prime}\qquad\Rightarrow\qquad X=X^{\prime}.
\]

\end{definition}

\begin{definition}
A braided categorical group $(\mathsf{C},\otimes)$ is called \emph{strict}
(but we usually write \emph{strictly associative}\footnote{Sometimes people
use the word \textquotedblleft strict\textquotedblright\ also to imply that
the commutativity constraint would have to be trivial, which would be much
more restrictive.}) if the associativity constraint%
\[
a_{X,Y,Z}:X\otimes(Y\otimes Z)\overset{\sim}{\longrightarrow}(X\otimes
Y)\otimes Z
\]
and the unit constraints%
\[
\mathbf{1}_{\mathsf{C}}\otimes X\overset{\sim}{\longrightarrow}X\qquad
\text{and}\qquad X\otimes\mathbf{1}_{\mathsf{C}}\overset{\sim}{\longrightarrow
}X
\]
are all identity maps (and in particular the objects on the left and right of
these isomorphisms are the same). If the braiding is symmetric, one also calls
such a $(\mathsf{C},\otimes)$ a \emph{permutative category}.
\end{definition}

Both of these properties do not really occur in nature much.

\begin{example}
Let $k$ be a field. The category $(\mathsf{Vect}_{f}(k),\otimes)$ of
$k$-vector spaces with the usual tensor product has neither property. In
particular, if one honestly follows the definitions,%
\[
k^{\ell}\otimes(k^{n}\otimes k^{m})\overset{\sim}{\longrightarrow}(k^{\ell
}\otimes k^{n})\otimes k^{m}%
\]
is an isomorphism between two genuinely different objects.
\end{example}

This example is too basic, and in many ways not really helpful. The
associativity constraint does such a truly basic and light thing in this
example that it is really hard to imagine that anything could ever go wrong or
that it is truly worth keeping the associativity constraint in mind.

As this is a critical possible misconception, we shall dwell on this for a
bit, even though every category theory inclined reader may shake their head in
boredom. We shall look at some non-trivial examples soon.

But first, we point out two basic simplication procedures for arbitrary
braided categorical groups:

\begin{theorem}
\label{thm_EverySymmMonoidalSkeletalizable}Every braided categorical group is
braided monoidal equivalent to a skeletal one. We call this
\emph{skeletalization}.
\end{theorem}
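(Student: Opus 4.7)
The plan is to carry out the standard skeletalization procedure and then check that the braided monoidal structure transports. First I would invoke the axiom of choice to pick, for each isomorphism class $g\in\pi_{0}(\mathsf{C},\otimes)$, a distinguished representative object $X_{g}\in\mathsf{C}$, and for every object $X\in\mathsf{C}$ an isomorphism $\phi_{X}\colon X\overset{\sim}{\to}X_{[X]}$, normalized so that $\phi_{X_{g}}=\operatorname{id}$. Let $\widetilde{\mathsf{C}}$ be the full subcategory of $\mathsf{C}$ spanned by the objects $\{X_{g}\}_{g\in\pi_{0}}$. As a category, $\widetilde{\mathsf{C}}$ is skeletal by construction, and the inclusion $\iota\colon\widetilde{\mathsf{C}}\hookrightarrow\mathsf{C}$ is fully faithful and essentially surjective, hence an equivalence of underlying groupoids.

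Next I would transport the braided monoidal structure from $\mathsf{C}$ to $\widetilde{\mathsf{C}}$ along $\iota$. Concretely, I define a new tensor product on $\widetilde{\mathsf{C}}$ by
\[
X_{g}\,\widetilde{\otimes}\,X_{h}:=X_{g+h}
\]
together with the natural isomorphism $\psi_{X_{g},X_{h}}:=\phi_{X_{g}\otimes X_{h}}\colon X_{g}\otimes X_{h}\overset{\sim}{\to}X_{g+h}$ on the ambient category. For arrows $f\colon X_{g}\to X_{g}$ and $f'\colon X_{h}\to X_{h}$ (both endomorphisms, since $\widetilde{\mathsf{C}}$ is skeletal), I set $f\,\widetilde{\otimes}\,f':=\psi_{X_{g},X_{h}}\circ(f\otimes f')\circ\psi_{X_{g},X_{h}}^{-1}$. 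The unit object of $\widetilde{\mathsf{C}}$ is chosen to be the representative $X_{0}=\mathbf{1}_{\widetilde{\mathsf{C}}}$ of the class of $\mathbf{1}_{\mathsf{C}}$. The associator, left and right unitors, and braiding on $\widetilde{\mathsf{C}}$ are then defined by conjugating the original constraints $a$, $\ell$, $r$, $s$ of $\mathsf{C}$ by the isomorphisms $\psi_{-,-}$ (and the chosen iso $\phi_{\mathbf{1}_{\mathsf{C}}}$). For the inverse data, I set $X_{g}^{-1}$ in $\widetilde{\mathsf{C}}$ to be $X_{-g}$, with counit obtained from $\varepsilon_{X_{g}}$ conjugated via $\psi$.

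Now I would verify that $(\widetilde{\mathsf{C}},\widetilde{\otimes})$ is a braided categorical group and that $\iota$, equipped with the monoidal structure arrows $\psi_{X_{g},X_{h}}^{-1}\colon\iota(X_{g}\,\widetilde{\otimes}\,X_{h})=X_{g+h}\to X_{g}\otimes X_{h}=\iota(X_{g})\otimes\iota(X_{h})$ and $\phi_{\mathbf{1}_{\mathsf{C}}}^{-1}\colon\mathbf{1}_{\widetilde{\mathsf{C}}}\to\mathbf{1}_{\mathsf{C}}$, is a braided monoidal equivalence. The pentagon, triangle, and hexagon axioms for the transported constraints are formal consequences of the corresponding axioms in $\mathsf{C}$, because every diagram one needs to check becomes, after conjugation by $\psi$'s, literally the corresponding diagram in $\mathsf{C}$. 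Similarly, the naturality of the transported constraints follows from that in $\mathsf{C}$.

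The one thing to watch out for --- and what I expect to be the only mildly fiddly step --- is bookkeeping the many different $\psi$'s that appear when one tries to transport the associator (a composite of four $\psi$'s) or the hexagon (six $\psi$'s) and confirming that the naturality of $\psi$, which is tautological since $\psi$ is defined pointwise, is enough to cancel the extra isomorphisms cleanly. This is a completely mechanical exercise once the convention $\phi_{X_{g}}=\operatorname{id}$ is adopted; no genuine obstruction arises because skeletalization is purely a choice of representatives and involves no cohomological data. Hence $\iota$ exhibits $(\mathsf{C},\otimes)$ as braided monoidally equivalent to the skeletal braided categorical group $(\widetilde{\mathsf{C}},\widetilde{\otimes})$, as required.
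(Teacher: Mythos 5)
Your proposal is correct and follows essentially the same route as the paper's own proof sketch: choose one representative per isomorphism class and a fixed isomorphism from each object to its representative, take the full subcategory on the representatives, define the tensor product by $X_{g}\,\widetilde{\otimes}\,X_{h}:=X_{g+h}$, and transport the associator, unitors and braiding by conjugating with the chosen isomorphisms. The paper leaves the coherence verification equally implicit, so nothing in your write-up diverges from or falls short of its argument.
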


\begin{theorem}
[Mac Lane--Isbell]\label{thm_EverySymmMonoidalStrictifiable}Every braided
monoidal category is braided monoidal equivalent to a strictly associative
one. We call this \emph{strictification}.
\end{theorem}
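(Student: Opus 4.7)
The plan is to adapt Mac Lane's classical monoidal strictification so as to preserve the braiding, via the ``tensor word'' construction, and then to transport the braided structure along the resulting equivalence.

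First, I would define a new category $\mathsf{C}'$ whose objects are finite sequences $\underline{X} = (X_1,\ldots,X_n)$ of objects of $\mathsf{C}$ (with $n \geq 0$, the empty sequence playing the role of a strict unit), and whose morphisms $\underline{X} \to \underline{Y}$ are morphisms $L(\underline{X}) \to L(\underline{Y})$ in $\mathsf{C}$, where $L(\underline{X}) := (\cdots(X_1 \otimes X_2) \otimes \cdots) \otimes X_n$ denotes the left-associated tensor. The tensor on $\mathsf{C}'$ would be concatenation of words on objects; this is strictly associative and strictly unital for free.

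Next, the tensor on morphisms requires the canonical coherence isomorphism $L(\underline{X}) \otimes L(\underline{Y}) \cong L(\underline{X}\ast\underline{Y})$ in $\mathsf{C}$, whose uniqueness is guaranteed by the Mac Lane coherence theorem. Setting $F(\underline{X}) := L(\underline{X})$ gives a functor $F : \mathsf{C}' \to \mathsf{C}$ which is essentially surjective and fully faithful by construction, hence an equivalence. I would then transport the braiding along $F$, defining $s^{\mathrm{str}}_{\underline{X},\underline{Y}}$ as the composite $L(\underline{X}\ast\underline{Y}) \cong L(\underline{X}) \otimes L(\underline{Y}) \xrightarrow{s_{L(\underline{X}),L(\underline{Y})}} L(\underline{Y}) \otimes L(\underline{X}) \cong L(\underline{Y}\ast\underline{X})$ inside $\mathsf{C}$.

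The main technical step is verifying that this transported braiding satisfies the hexagon axioms on $\mathsf{C}'$ and that $F$ is indeed a braided monoidal equivalence. I expect this to reduce entirely to the braided coherence theorem of Joyal--Street: any two morphisms between specified iterated tensors built out of associators, unit constraints, and braidings (inserted in ways corresponding to the same underlying permutation) coincide. Granted coherence, every required diagram in $\mathsf{C}'$ becomes, after applying $F$, a diagram in $\mathsf{C}$ that is itself an instance of the braided monoidal axioms.

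The main obstacle I foresee is less conceptual than bookkeeping: one must keep careful track of the difference between the strict tensor on $\mathsf{C}'$ (concatenation) and the non-strict tensor on $\mathsf{C}$, and insert the coherence isomorphisms in the right places. An alternative would be to first strictify the underlying monoidal structure by pure Mac Lane and then separately argue that the braiding descends, but this essentially repackages the same coherence input.
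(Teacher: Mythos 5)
Your proposal is correct and takes essentially the same route as the paper's proof sketch: the free-monoid/word construction with the left-bracketed evaluation functor $\psi=L$, concatenation as the strict tensor, Hom-sets imported from $\mathsf{C}$, and the braided structure transported along the resulting equivalence (the paper likewise defers the final braided-monoidality check to Joyal--Street). One caveat worth noting: the coherence statement you invoke --- that any two composites of associators, unit constraints and braidings with the same underlying permutation coincide --- is the \emph{symmetric} coherence theorem and is false for genuinely braided categories (e.g.\ $s_{Y,X}\circ s_{X,Y}$ need not be the identity); you do not actually need it, since the hexagon axioms for the transported braiding follow from naturality of the structural isomorphisms together with the hexagons in $\mathsf{C}$.
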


We shall sketch both proofs below, in order to see what can go wrong. However,
aside from proving these, and this is crucial, in general it is
\textit{absolutely impossible} to achieve being strictly associative and
skeletal simultaneously.

\begin{proof}
[Proof sketch for Theorem \ref{thm_EverySymmMonoidalSkeletalizable}]Any
category is equivalent to a skeletal one: Given $\mathsf{C}$, (as Step 1) pick
precisely one object $X$ from any isomorphism class. Further, for any object
in $\mathsf{C}$, pick (as Step 2) a fixed isomorphism to the single chosen
object representing its isomorphism class. Now consider the full subcategory
$\mathsf{C}^{\prime}$ of these objects. It is, by construction, skeletal. The
inclusion $\mathsf{C}^{\prime}\rightarrow\mathsf{C}$ is clearly a fully
faithful functor, but it is also essentially surjective by construction. Now
we make $\mathsf{C}^{\prime}$ braided monoidal: For any $X,Y\in\mathsf{C}%
^{\prime}$ we have $X\otimes_{\mathsf{C}}Y$ in $\mathsf{C}$, and there will be
precisely one object $C_{X,Y}\in\mathsf{C}^{\prime}$ in the same isomorphism
class (by Step 1). Define $X\otimes_{\mathsf{C}^{\prime}}Y:=C_{X,Y}$. Define
the associativity and commutativity contraint by pre- and postcomposing the
ones in $\mathsf{C}$ with the fixed isomorphisms (of Step 2) to the fixed
objects representing the isomorphism classes of $\mathsf{C}$ uniquely in
$\mathsf{C}^{\prime}$.
\end{proof}

We gave the proof because it makes clear how this procedure can mess up the
associativity and commutativity constraints, even if they were easy maps in
the original category.

\begin{example}
In the notation of the proof, given $X,Y\in\mathsf{C}$, we obtain objects
$C_{X,Y}$ and $C_{Y,X}$ in $\mathsf{C}^{\prime}$ and the braiding becomes%
\begin{equation}
C_{X,Y}\overset{\sim}{\longrightarrow}X\otimes Y\overset{\sim}{\underset
{s_{X,Y}}{\longrightarrow}}Y\otimes X\overset{\sim}{\longleftarrow}%
C_{Y,X}\text{,} \label{lumb1}%
\end{equation}
where $s_{X,Y}$ is the original braiding of $\mathsf{C}$. Since our category
$\mathsf{C}^{\prime}$ is skeletal, we must have $C_{X,Y}=C_{Y,X}$, so the
composite map is just an automorphism of $C_{X,Y}$. However, since we have no
overall control of the two outer isomorphisms in Equation \ref{lumb1} (the
ones chosen in Step 2 of the proof sketch above), it is by no means clear that
this is the identity automorphism.
\end{example}

But one may believe that a more careful construction in the proof would be
able to solve this problem. This is not so. Although well-documented in the
literature, let us have a look at Isbell's highly instructive counterexample,
which immediately crushes all hope.

\begin{theorem}
[Isbell \cite{MR0249484}]Let $\mathsf{C}$ be a category which has all
products. Suppose there is some object $X$ such that $X\times X\simeq X$ and
$\operatorname*{End}(X)$ has at least two elements. Make $\mathsf{C}$ a
symmetric monoidal category using the product, i.e. $X\otimes Y$ is a product
of $X,Y$. Then $(\mathsf{C},\otimes)$ is not monoidal equivalent to any
simultaneously strictly associative and skeletal monoidal category.
\end{theorem}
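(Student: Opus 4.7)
The plan is to argue by contradiction. Suppose $(\mathsf{D},\otimes)$ is simultaneously strictly associative and skeletal, and is monoidally equivalent to $(\mathsf{C},\otimes)$ via some functor $F$.

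First I would transport the cartesian structure. A monoidal equivalence preserves limits and identifies the two tensor products up to coherent natural isomorphism, so $\otimes$ on $\mathsf{D}$ is itself a categorical product, with projections $\pi_{1},\pi_{2}$ obtained by transporting those of $\mathsf{C}$ along the monoidal coherence iso. Set $X^{\prime}:=F(X)$. The hypothesis $X\times X\simeq X$ in $\mathsf{C}$ gives $X^{\prime}\otimes X^{\prime}\simeq X^{\prime}$ in $\mathsf{D}$, and by skeletality this becomes the literal equality $X^{\prime}\otimes X^{\prime}=X^{\prime}$. Strict associativity and unitality then force $X^{\prime\otimes k}=X^{\prime}$ for every $k\geq1$. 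In particular $\pi_{1},\pi_{2}\in\operatorname{End}(X^{\prime})$, and the universal property of the product provides a bijection of sets
\[
\Phi:\operatorname{End}(X^{\prime})\longrightarrow\operatorname{End}(X^{\prime})\times\operatorname{End}(X^{\prime}),\qquad h\longmapsto(\pi_{1}\circ h,\,\pi_{2}\circ h).
\]

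When the cardinality $n:=|\operatorname{End}(X^{\prime})|$ is finite, the contradiction is immediate: since $F$ is an equivalence one has $n=|\operatorname{End}_{\mathsf{C}}(X)|\geq2$, whereas $n=n^{2}$ forces $n\leq1$. For infinite $n$ the cardinality equation is self-consistent, so one must extract more structure. Here I would exploit the interchange law $(f\otimes g)(h\otimes k)=(fh)\otimes(gk)$ combined with the two bracketings of $X^{\prime\otimes3}=X^{\prime}$: strict associativity forces the triples of projections coming from $(X^{\prime}\otimes X^{\prime})\otimes X^{\prime}$ and $X^{\prime}\otimes(X^{\prime}\otimes X^{\prime})$ to agree as endomorphisms of $X^{\prime}$, and feeding these relations through $\Phi$ (using also the diagonal $\Delta\in\operatorname{End}(X^{\prime})$ with $\pi_{i}\Delta=\operatorname{id}$) one aims to drive toward the collapse $\pi_{1}=\pi_{2}$; but this would force every pair of morphisms to $X^{\prime}$ to coincide by the universal property, once more contradicting $n\geq2$.

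The main obstacle is therefore the infinite case: the finite cardinality dispatch is routine, but obtaining a contradiction when $\operatorname{End}(X^{\prime})$ is infinite requires patiently unpacking the interaction between the product structure, the interchange law, and the coincidence of bracketings forced by strict associativity, which is where the substantive part of Isbell's original argument lies.
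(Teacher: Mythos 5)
There is a genuine gap. Your finite-cardinality case is not just ``routine''---it is vacuous: already in $\mathsf{C}$ itself, $X\times X\simeq X$ gives a bijection $\operatorname{End}(X)\cong\operatorname{End}(X)\times\operatorname{End}(X)$, so the hypotheses force $\operatorname{End}(X)$ to be infinite before any strictification is mentioned (indeed all the motivating examples, sets and all $k$-vector spaces, have infinite endomorphism monoids). Hence the entire content of the theorem is exactly the case you leave open. For that case you only state an intention (``one aims to drive toward the collapse $\pi_{1}=\pi_{2}$''), naming plausible ingredients---the interchange law, the coincidence of bracketings, the diagonal---but you never derive the contradiction, and you yourself flag this as ``where the substantive part of Isbell's original argument lies.'' As it stands, no proof has been given.

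For comparison, the paper's proof needs no cardinality split and is uniform: fix $f,g,h\in\operatorname{End}(X)$ and compare four composites built from $f\times g$, $(f\times g)\times h$, $f\times(g\times h)$ and the first-factor projections out of the two bracketings $(X\times X)\times X$ and $X\times(X\times X)$. Two pairs of these agree by naturality of the projections. After passing to the strictly associative skeletal model, strictness makes the associator the identity, and skeletality (together with the fact that the equivalence preserves products) identifies $X\times X$ with $X$ and forces the two first-factor projections to coincide with $p_{X}^{X,X}$; bifunctoriality of $\otimes$ then makes all four composites equal, yielding $f\circ p_{X}^{X,X}=(f\times g)\circ p_{X}^{X,X}$. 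Since $p_{X}^{X,X}$ is split epi (by the diagonal), $f=f\times g$, and by symmetry $f\times g=g$, so $f=g$ for all $f,g$, contradicting $|\operatorname{End}(X)|\geq 2$. If you want to complete your write-up, these are precisely the two points your sketch is missing: a justification that the relevant projections literally coincide after skeletalization plus strictification, and the epicness of the projection used to cancel it; without them the ``collapse'' you aim for does not follow.
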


\begin{example}
The category of sets or the category of all $k$-vector spaces are examples.
The category of all finite-dimensional $k$-vector spaces is not an example
(see Example \ref{ex_FinDimVSpaces}).
\end{example}

\begin{proof}
For a product of objects $X,Y$, we write $p_{X}^{X,Y}$ resp. $p_{Y}^{X,Y}$ for
the two projections. Let $f,g,h:X\rightarrow X$ be arbitrary morphisms. We
consider the following four composite morphisms:%
\begin{equation}%
\xymatrix@!R=0.03in{
(X\times X) \times X \ar[r]^-{p_{X\times X}^{X\times X,X}} & X\times
X \ar[r]^-{f \times g} & X\times X \\
(X\times X) \times X \ar[r]^-{(f \times g) \times h} & (X\times X)\times
X \ar[r]^-{p_{X\times X}^{X\times X,X}} & X \times X \\
X\times(X\times X) \ar[r]^-{f \times(g \times h)} & X\times(X\times
X) \ar[r]^-{{p_{X}^{X,X\times X}}} & X \\
X\times(X\times X) \ar[r]^-{p_{X}^{X,X\times X}} & X \ar[r]^-{f} & X
}
\label{lixfi1}%
\end{equation}
Note that the first and the second composite morphism agree on the nose.
Moreover, the third and the fourth composite morphism agree. Proof by
contradiction: Assume $\mathsf{C}$ is monoidal equivalent to a strictly
associative skeletal category $\mathsf{C}^{\prime}$, say by some functor $F$.
We apply this functor and consider the four morphisms above. For simplicity,
let us from now on write $X$ instead of $F(X)$; or equivalently without loss
of generality assume $\mathsf{C}$ had been $\mathsf{C}^{\prime}$ to start
with. Two new things happen: Since our target category is strictly
associative, the associativity constraint $X\times(X\times X)\longrightarrow
(X\times X)\times X$ is between the same objects and the identity map. Since
the monoidal bifunctor is natural in each variable, it follows that the second
and third composite morphism simplify to%
\begin{align*}
&  X\times X\times X\overset{f\times g\times h}{\longrightarrow}X\times
X\times X\overset{p_{X\times X}^{X\times X,X}}{\longrightarrow}X\times X\\
&  X\times X\times X\overset{f\times g\times h}{\longrightarrow}X\times
X\times X\overset{p_{X}^{X,X\times X}}{\longrightarrow}X
\end{align*}
and the left arrows in both lines agree. However, since the category is
skeletal, $X\times X\cong X$ means that these are the same object. This means
that all first factor projections of the products here agree with $p_{X}%
^{X,X}$, using that our equivalence of categories has preserved products.
Thus, also the second arrows in both lines above agree. But this means that
the composites agree. Hence, all four morphisms in Equation \ref{lixfi1} are
the same. Moreover $X\times X\times X=X$. Thus, comparing the first with the
last composite in Equation \ref{lixfi1}, we obtain%
\[
f\circ p_{X}^{X,X}=(f\times g)\circ p_{X}^{X,X}%
\]
and this is valid for arbitrary choices of $f,g,h$ above. As $p_{X}^{X,X}$ is
an epic, we deduce $f=f\times g$. Swapping the r\^{o}les of $f,g$, we also
obtain $f\times g=g$, and then $f=g$ for arbitrary $f,g$, contradicting that
$f,g\in\operatorname*{End}(X)$ can be chosen different by assumption.
\end{proof}

\begin{example}
\label{ex_FinDimVSpaces}The category of all finite-dimensional $k$-vector
spaces with the direct sum $(\mathsf{Vect}_{f}(k),\oplus)$ is symmetric
monoidal equivalent to a both strictly associative and skeletal symmetric
monoidal category. As objects for our strictly associative skeletal
$\mathsf{C}^{\prime}$ take $\mathbb{Z}_{\geq0}$ and define $n\boxplus m:=n+m$,
$\operatorname*{Hom}\nolimits_{\mathsf{C}^{\prime}}(n,m):=\operatorname*{Hom}%
\nolimits_{k}(k^{n},k^{m})$ with the identity commutativity constraint. Then
$F:(\mathsf{C}^{\prime},\boxplus)\rightarrow(\mathsf{Vect}_{f}(k),\oplus)$
sending $n$ to $k^{n}$ is a symmetric monoidal equivalence. One can try to
extend this to allowing countably infinite dimension. As objects take
$\mathbb{Z}_{\geq0}\cup\{\infty\}$ and define $n\boxplus\infty:=\infty$ and
$\infty\boxplus\infty:=\infty$, and take $F(\infty):=k^{\oplus\infty}$. Given
$f,g\in\operatorname*{Hom}\nolimits_{k}(k^{\infty},k^{\infty})$, it is unclear
how to define
\[
f\oplus g\in\operatorname*{Hom}\nolimits_{k}(k^{\infty},k^{\infty})\text{,}%
\]
leading us back to Isbell's counterexample.
\end{example}

Now, let us quickly look at how to make a braided monoidal category strictly associative.

\begin{proof}
[Proof sketch for Theorem \ref{thm_EverySymmMonoidalStrictifiable}]Let $X$ be
the free monoid generated by the objects of $\mathsf{C}$, i.e. objects are
finite formal words%
\[
\text{\textquotedblleft}x_{1}x_{2}\ldots x_{m}\text{\textquotedblright,}%
\]
where the letters \textquotedblleft$x_{1},\ldots,x_{m}$\textquotedblright\ are
objects $x_{i}\in\mathsf{C}$. The empty word is also allowed. We define a
category $\mathsf{C}^{\prime}$ by having $X$ as its objects, and define on
objects%
\begin{align*}
\psi:\mathsf{C}^{\prime}  &  \longrightarrow\mathsf{C}\\
\left.  \text{\textquotedblleft}x_{1}x_{2}\ldots x_{m}\text{\textquotedblright%
}\right.   &  \longmapsto(\ldots(x_{1}\otimes x_{2})\otimes\ldots)\otimes
x_{m-1})\otimes x_{m})\text{,}%
\end{align*}
i.e. we apply the monoidal bifunctor to \textquotedblleft$x_{1}x_{2}\ldots
x_{m}$\textquotedblright, bracketed all to the left. The empty word goes to
$\mathbf{1}_{\mathsf{C}}$, a single letter \textquotedblleft$x_{1}%
$\textquotedblright\ to the object it represents, \textquotedblleft$x_{1}%
x_{2}$\textquotedblright\ to $x_{1}\otimes x_{2}$, \textquotedblleft%
$x_{1}x_{2}x_{3}$\textquotedblright\ to $((x_{1}\otimes x_{2})\otimes x_{3})$
and so on. We equip $\mathsf{C}^{\prime}$ with the structure of a category by
demanding that the above should be a fully faithful functor. This means that%
\begin{equation}
\operatorname*{Hom}\nolimits_{\mathsf{C}^{\prime}}(\text{\textquotedblleft%
}x_{1}x_{2}\ldots x_{m}\text{\textquotedblright},\text{\textquotedblleft}%
y_{1}y_{2}\ldots y_{n}\text{\textquotedblright}):=\operatorname*{Hom}%
\nolimits_{\mathsf{C}}(\psi(x_{1}x_{2}\ldots x_{m}),\psi(y_{1}y_{2}\ldots
y_{n}))\text{.} \label{lefmu1}%
\end{equation}
Define a functor $\psi^{\prime}:\mathsf{C}\rightarrow\mathsf{C}^{\prime}$ in
the opposite direction by sending the object $x$ to the single letter word
\textquotedblleft$x$\textquotedblright. This gives an equivalence of
categories. Define a braided monoidal structure on $\mathsf{C}^{\prime}$ by
simply concatenating words,%
\[
\text{\textquotedblleft}x_{1}x_{2}\ldots x_{m}\text{\textquotedblright}%
\otimes\text{\textquotedblleft}y_{1}y_{2}\ldots y_{n}\text{\textquotedblright%
}:=\text{\textquotedblleft}x_{1}x_{2}\ldots x_{m}y_{1}y_{2}\ldots
y_{n}\text{\textquotedblright,}%
\]
and compatibly on morphisms (as the $\operatorname*{Hom}$-sets agree with the
input category $\mathsf{C}$, one can import this part of the structure, see
Equation \ref{lefmu1}). Let the empty word \textquotedblleft\textquotedblright%
\ act as the tensor unit $\mathbf{1}_{\mathsf{C}^{\prime}}$. Since we really
just concatenate words, it is clear that%
\[
\mathbf{1}_{\mathsf{C}^{\prime}}\otimes\text{\textquotedblleft}x_{1}%
x_{2}\ldots x_{m}\text{\textquotedblright}=\text{\textquotedblleft}x_{1}%
x_{2}\ldots x_{m}\text{\textquotedblright,}%
\]
and correspondingly for \textquotedblleft$x_{1}x_{2}\ldots x_{m}%
$\textquotedblright$\otimes\mathbf{1}_{\mathsf{C}^{\prime}}$, so
$(\mathsf{C}^{\prime},\otimes)$ indeed is a strictly associative braided
monoidal category. Finally, one has to check that the equivalence of
categories $\psi$ is braided monoidal. See \cite{MR1250465} for a complete argument.
\end{proof}

We see that the above procedure increases the cardinality of pairwise
isomorphic objects massively.

\begin{example}
If we strictify associativity in $(\mathsf{Vect}_{f}(k),\oplus)$ using the
procedure of the above proof, objects will be finite words of vector spaces
\textquotedblleft$x_{1}x_{2}\ldots x_{m}$\textquotedblright\ which end up
being isomorphic if and only if $\sum\dim x_{i}=\sum\dim x_{i}^{\prime}$.
Thus, we get a very different category from the strictly associative one in
Example \ref{ex_FinDimVSpaces}.
\end{example}

We also see that alternating between this strictification and skeletalization
procedure will just yield more and more complicated categories, which rather
carry us farther away from our goal of having a simultaneously skeletal and
strictly associative model.

\section{Strictification\label{sect_Strictification}}

\subsection{Abelian and symmetric cohomology}

In order to dig a little deeper into these problems, we should first recall
two `cohomology theories' (of sorts). We first need to recall how group
cohomology can be defined, from two different viewpoints. Let $G$ be a group
and $M$ a $G$-module. In pure algebra, \emph{group cohomology} is usually
defined as%
\[
H_{grp}^{n}(G,M):=\operatorname*{Ext}\nolimits_{\mathbb{Z}[G]}^{n}%
(\mathbb{Z},M)
\]
in the category of $\mathbb{Z}[G]$-modules, where $\mathbb{Z}[G]$ is the group
ring. Since $\operatorname*{Ext}\nolimits_{\mathbb{Z}[G]}^{0}(\mathbb{Z}%
,M)=\operatorname*{Hom}\nolimits_{\mathbb{Z}[G]}(\mathbb{Z},M)=M^{G}$, these
$\operatorname*{Ext}$-groups correspond to the right derived functor groups
$\mathbf{R}^{n}(-)^{G}$ of the left-exact functor of taking $G$-invariants. In
topology one may prefer a different take on the same concept: Write $K(M,n)$
for the $n$-th Eilenberg--Mac\thinspace Lane space with single non-zero
homotopy group $M$ in degree $n$. If $M$ has the trivial $G$-module structure,
one can also define group cohomology as%
\[
H_{grp}^{n}(G,M):=[K(G,1),K(M,n)]\text{,}%
\]
the pointed homotopy classes of maps from $K(G,1)$ (which is nothing but the
classifying space of the group).

The topological as well as the algebraic approach to the definition are
equivalent and give the same cohomology groups. While we mostly work with an
algebraic viewpoint below, the topologist's definition makes it much easier to
motivate the construction of two other cohomology theories:

\begin{definition}
[Eilenberg--Mac\thinspace Lane]\label{def_EMStyleTop}Let $G$ be an abelian
group and $M$ an abelian group (regarded as a trivial $G$-module).

\begin{enumerate}
\item The groups%
\[
H_{ab}^{n}(G,M):=[K(G,2),K(M,n+1)]
\]
are called \emph{abelian cohomology}.

\item The groups%
\[
H_{sym}^{n}(G,M):=[K(G,3),K(M,n+2)]
\]
are called \emph{symmetric cohomology}.
\end{enumerate}
\end{definition}

We can quickly motivate where these definitions originate from: There are
equivalences of homotopy categories, due to Joyal--Tierney (following ideas of
Grothendieck),%
\begin{align}
Ho(\text{categorical groups})  &  \longrightarrow Ho(\text{homotopy
}[1,2]\text{-types})\text{,}\nonumber\\
Ho(\text{braided categorical groups})  &  \longrightarrow Ho(\text{homotopy
}[2,3]\text{-types})\text{,}\label{lzx1}\\
Ho(\text{Picard groupoids})  &  \longrightarrow Ho(\text{homotopy
}[n,n+1]\text{-types})\text{,}\nonumber\\
&  \qquad\qquad\qquad\text{(for any }n\geq3\text{)}\nonumber
\end{align}
where on the right we refer to pointed unstable homotopy types. In each case
the objects on the right-hand side are determined by providing the groups
$G,M$ forming the two consecutive non-trivial $\pi_{n},\pi_{n+1}$ as well as a
cocycle corresponding to the corresponding Postnikov invariant, and concretely
the relevant $k$-invariant lies in%
\[
H^{3}(G,M)\text{,\quad resp.}\quad H_{ab}^{3}(G,M)\text{,\quad resp.}\quad
H_{sym}^{3}(G,M)\text{,}%
\]
depending on which of the three cases one considers. In Equation \ref{lzx1}
the functor inducing the equivalence is always a suitable kind of nerve. We
refer to \cite{MR1219923}, \cite{MR1414569}, \cite{MR1890928},
\cite{MR2981952} for details on this story. We shall see this in a little more
detail in \S \ref{sect_JoyalStreetClassif} in the braided case, which is the
only instance of the above relevant for our purposes.

While introducing the cohomology groups of Definition \ref{def_EMStyleTop} in
the above topological language is nice conceptually, it does not help much for
computations. For group cohomology one can use the standard
Chevalley--Eilenberg projective resolution of the trivial $G$-module
$\mathbb{Z}$ as a $\mathbb{Z}[G]$-module in order to compute
$\operatorname*{Ext}\nolimits_{\mathbb{Z}[G]}^{n}(\mathbb{Z},M)$. When using
the quasi-isomorphic complex of normalized cocycles $P_{\bullet}$ (so that the
complex%
\[
(\operatorname*{Hom}\nolimits_{\mathbb{Z}[G]}(P_{\bullet},M),\partial
^{\bullet})
\]
is the one which is frequently called the complex of `inhomogeneous chains',
e.g., \cite[Chapter I, \S 2]{MR2392026}) instead, a \emph{group }%
$3$\emph{-cocycle} is a map%
\[
h:G^{3}\longrightarrow M
\]
such that $\partial h=0$, which unravels as the condition%
\begin{equation}
h(x,y,z)+h(u,x+y,z)+h(u,x,y)=h(u,x,y+z)+h(u+x,y,z)\text{.} \label{lefmu1a}%
\end{equation}
A \emph{group }$3$\emph{-coboundary} is $\partial k$ for some $k:G^{2}%
\rightarrow M$ such that $h$ has the shape%
\begin{equation}
h(x,y,z)=k(y,z)-k(x+y,z)+k(x,y+z)-k(x,y)\text{.} \label{lefmu2}%
\end{equation}
These explicit expressions can directly be unravelled from \cite[Chapter I,
\S 2]{MR2392026} for example.

In the literature on symmetric or braided monoidal categories, one often uses
the following additional condition:

\begin{definition}
\label{def_NormalizedCochain}An inhomogeneous chain $h:G^{n}\rightarrow M$ is
called \emph{normalized} if $h(x_{1},\ldots,x_{n})=0$ as soon as $x_{i}=0$ for
some $i$.
\end{definition}

One can always restrict to considering normalized cochains in view of the
following fact.

\begin{lemma}
\label{lemma_CochainsCanBeAssumedNormalized}Given an inhomogeneous chain
$h:G^{n}\rightarrow M$, there exists a normalized inhomogeneous chain
$h^{\prime}:G^{n}\rightarrow M$ such that $[h]\equiv\lbrack h^{\prime}]\in
H^{n}(G,M)$ both represent the same group cohomology class.
\end{lemma}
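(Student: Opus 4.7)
The plan is to modify the cocycle $h$ by subtracting coboundaries that successively kill its values on tuples in which some argument is $0$. I would work one coordinate at a time: starting from $h$, produce a cohomologous cocycle $h_1$ that vanishes whenever its first argument is $0$, then modify $h_1$ to $h_2$ that additionally vanishes whenever its second argument is $0$, and so on. After $n$ steps we obtain a fully normalized $h' = h_n$ cohomologous to $h$.

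The inductive step is the heart of the argument. Assuming $h$ already vanishes whenever $x_i = 0$ for some $i < j$, I would define an $(n-1)$-cochain $k$ by
$$k(x_1, \ldots, x_{n-1}) \;:=\; \pm\, h(x_1, \ldots, x_{j-1},\, 0,\, x_j, \ldots, x_{n-1}),$$
with the sign chosen according to the position $j$ in the coboundary formula. A direct calculation, using the cocycle identity $\partial h = 0$ specialized to a tuple of the form $(x_1, \ldots, x_{j-1}, 0, x_j, \ldots, x_n)$ and using the inductive hypothesis to discard most of the terms of the coboundary relation, then shows that $h - \partial k$ still satisfies the earlier normalizations and in addition vanishes whenever $x_j = 0$. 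Replacing $h$ by $h - \partial k$ and repeating for $j = 1, 2, \ldots, n$ yields the required $h'$.

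The main obstacle is purely combinatorial, namely keeping track of signs and verifying that the inductive step does not spoil the normalization already achieved in positions $< j$. This is tedious but straightforward from the explicit coboundary formula analogous to Equation \ref{lefmu1a}. A more conceptual alternative would be to observe that the subcomplex $N^\bullet(G,M)$ of normalized inhomogeneous cochains is quasi-isomorphic to the full cochain complex $C^\bullet(G,M)$, which is classical and corresponds to the well-known comparison between the bar resolution and the normalized bar resolution of $\mathbb{Z}$ over $\mathbb{Z}[G]$. This immediately implies the lemma. Given that the surrounding text is written in a self-contained style, I would present the inductive argument explicitly, but I would expect no genuine conceptual difficulty — only the bookkeeping.
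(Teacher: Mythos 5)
Your proposal is correct and matches the paper's own treatment: the paper does not spell out a proof but simply points to the two routes you describe, namely the comparison of the normalized with the unnormalized bar complex (citing Weibel) and, for a direct argument, the coordinate-by-coordinate coboundary correction left as an exercise in the group-cohomology literature. Your inductive sketch is exactly that standard direct argument, so apart from the sign bookkeeping you acknowledge, there is nothing to add.
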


\begin{proof}
This corresponds to using the normalized versus the unnormalized bar complex,
see \cite[Chapter 6, \S 6.5.5]{MR1269324} for background. For a direct proof
without going back to the formalism of bar complexes, one can follow
\cite[Chapter I, \S 2, Exercise 5]{MR2392026}.
\end{proof}

Next, following Eilenberg and Mac\thinspace Lane we discuss the analogous
explicit expressions for abelian and symmetric cohomology.

\begin{definition}
\label{def_Ab3Cocycle}Let $G,M$ be abelian groups.

\begin{enumerate}
\item An \emph{abelian }$3$\emph{-cocycle} is a pair $(h,c)$ consisting of a
group $3$-cocycle $h:G^{3}\rightarrow M$ such that%
\begin{equation}
h(x,0,z)=0 \label{lefmu2aa}%
\end{equation}
and a map $c:G^{2}\rightarrow M$ which satisfies%
\begin{align}
h(y,z,x)+c(x,y+z)+h(x,y,z)  &  =c(x,z)+h(y,x,z)+c(x,y)\tag{A}\label{lx_a_1}\\
-h(z,x,y)+c(x+y,z)-h(x,y,z)  &  =c(x,z)-h(x,z,y)+c(y,z) \tag{A'}\label{lx_a_2}%
\end{align}
for all $x,y,z\in G$.

\item An \emph{abelian }$3$\emph{-coboundary}\textit{ is a group }$3$-cocycle
$h=\partial k$ for some map $k:G^{2}\rightarrow M$ with $k(x,0)=0$ and
$k(0,y)=0$. It can be interpreted as an abelian $3$-cocycle $(h,c)$ using%
\begin{equation}
c(x,y):=k(x,y)-k(y,x)\text{.} \label{lefmu2a}%
\end{equation}

\end{enumerate}
\end{definition}

Note that every abelian $3$-coboundary is indeed an abelian $3$-cocycle, e.g.%
\[
h(x,0,z)=k(0,z)-k(x,0)=0
\]
holds by Equation \ref{lefmu2}.

\begin{remark}
The condition $h(x,0,z)=0$ in Equation \ref{lefmu2a} implies $h(x,y,z)=0$ as
soon as one of $x,y,z$ is zero. To see this, use Equation \ref{lefmu1a} with
$y=0$, giving $h(u,x,0)=0$ and then Equation \ref{lefmu1a} with $x=0$. This
shows that these cocycles are automatically assumed normalized in the sense of
Definition \ref{def_NormalizedCochain}.
\end{remark}

\begin{definition}
\label{def_SymmetricCocycles}A \emph{symmetric }$3$\emph{-cocycle} is an
abelian $3$-cocycle $(h,c)$ such that%
\begin{equation}
c(x,y)=-c(y,x) \label{lx_b_1}%
\end{equation}
for all $x,y\in G$. A \emph{symmetric }$3$\emph{-coboundary} is the same as an
abelian $3$-coboundary.
\end{definition}

Note that an abelian $3$-coboundary is indeed always a symmetric $3$-cocycle,
as is seen from Equation \ref{lefmu2a}.

\begin{definition}
A \emph{quadratic form} $q:G\rightarrow M$ is any map of sets such that
$q(x)=q(-x)$ and%
\begin{equation}
b(x,y):=q(x+y)-q(x)-q(y) \label{lefmu2b}%
\end{equation}
is $\mathbb{Z}$-bilinear for all $x,y\in G$. This bilinear form is called the
\emph{polarization} \emph{form}. Write $\operatorname*{Quad}(G,M)$ for the set
of all quadratic forms.
\end{definition}

\begin{theorem}
[Eilenberg--Mac Lane]\label{thm_EMDeepIso}The group $H_{ab}^{3}(G,M)$ (resp.
$H_{sym}^{3}(G,M)$) of Definition \ref{def_EMStyleTop} can be described as the
set of abelian (resp. symmetric) $3$-cocycles modulo abelian (resp. symmetric)
$3$-coboundaries in the sense of Definition \ref{def_Ab3Cocycle} (resp.
Definition \ref{def_SymmetricCocycles}). The map%
\begin{align}
\operatorname*{tr}:H_{ab}^{3}(G,M)  &  \longrightarrow\operatorname*{Quad}%
(G,M)\label{lEMMa2}\\
(h,c)  &  \longmapsto(x\mapsto c(x,x))\nonumber
\end{align}
is an isomorphism.
\end{theorem}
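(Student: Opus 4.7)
The plan is to verify three things in order: the cocycle description of $H_{ab}^3$ matches the topological definition of Definition \ref{def_EMStyleTop}; the trace map $\operatorname*{tr}$ is well-defined at the cocycle level and vanishes on coboundaries; and it is bijective.

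For the first step, I would invoke Eilenberg--Mac\,Lane's original computation of the cohomology of $K(G,2)$ via an explicit algebraic cochain complex coming from the bar resolution of the underlying simplicial abelian group, in which a degree-$n$ cochain unpacks as a pair consisting of a group $n$-cochain $h$ together with lower-degree auxiliary data encoding the commutativity. In degree $3$ this data pares down exactly to $(h,c)$, and the cocycle condition unpacks into Equation \ref{lefmu1a} for $h$ together with (A) and (A') linking $h$ and $c$. This identification is classical and I would simply cite it from \cite{MR1269324}, treating Definition \ref{def_Ab3Cocycle} as the working definition from here on.

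For well-definedness, the evenness $q(-x) = q(x)$ follows from suitable specializations of (A) and (A') combined with the normalization consequences of $h(x,0,z)=0$. The crucial computation is the bilinearity of $b(x,y) = q(x+y) - q(x) - q(y)$: one expands $c(x+y, x+y)$ via two well-chosen substitutions in (A) and (A'), then uses the group $3$-cocycle identity \ref{lefmu1a} for $h$ to cancel all $h$-terms, leaving the clean identity $b(x,y) = c(x,y) + c(y,x)$, which is manifestly bi-additive in each argument because $c$ is. Coboundaries map to zero immediately, since $c(x,y) = k(x,y) - k(y,x)$ yields $c(x,x) = 0$.

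Surjectivity of $\operatorname*{tr}$ is witnessed by the construction announced as Theorem D in the introduction, which produces an explicit abelian $3$-cocycle representative of any prescribed quadratic form $q$, using a basis of $G/2G$ and a bilinear form $t$ polarizing $b$, supplemented by $2$-torsion corrections along the basis in the non-polar case. Injectivity amounts to showing that an abelian $3$-cocycle $(h,c)$ with $c(x,x)=0$ for all $x$ is a coboundary: from $b(x,y) = c(x,y) + c(y,x) = 0$ one sees $c$ is antisymmetric, and the hexagons then constrain $h$ enough to allow one to construct the required $k : G^2 \to M$ with $(h,c) = \partial k$, either by transfinite induction on a well-ordering of $G$ or by reducing via naturality to finitely generated and then to cyclic $G$, where both sides are computable by hand. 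The main obstacle is the surjectivity step: the failure of $2$ to be invertible in $M$ forces non-canonical choices so that (A) and (A') hold on the nose, which is precisely the delicate bookkeeping carried out in Theorem D. The cleanest logical organization is therefore to prove Theorem D first and deduce surjectivity of $\operatorname*{tr}$ from it, with injectivity following by a parallel but uniqueness-oriented argument on the level of cochains.
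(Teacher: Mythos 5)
You should first be aware that the paper does not prove this theorem: it is quoted from Eilenberg--Mac\thinspace Lane, with pointers to \cite{MR0045115} and \cite{MR0056295}, and the only ingredient the paper verifies itself is the well-definedness of the trace at the cocycle level (Lemmas \ref{lemma_v2}, \ref{lemma_v3}, \ref{lemma_TraceIsQuadratic}, giving $b(x,y)=c(x,y)+c(y,x)$ and $q(-x)=q(x)$). Your middle paragraph reproduces that part correctly, and your first paragraph (citing the classical identification of the topological groups with the $(h,c)$-cocycle description) is in the same spirit as the paper's citation.

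The genuine gap is your surjectivity step. Theorem D (Theorem \ref{thm_Main}) produces a cocycle representative only for \emph{polar} quadratic forms; there is no ``non-polar case supplemented by $2$-torsion corrections'' in that theorem, and there cannot be, since any cocycle of the shape $(0,c)$ with $c$ bilinear has polar trace (take $t:=c$). The paper's own example $q(x)=x^{2}:\mathbb{Z}/2\rightarrow\mathbb{Z}/4$ is non-polar, and its representative necessarily has $h(1,1,1)=2\neq0$ with a non-bilinear $c$; such forms are invisible to Theorem \ref{thm_Main}, so surjectivity of $\operatorname*{tr}$ onto all of $\operatorname*{Quad}(G,M)$ cannot be deduced from it. There is also a latent circularity: the paper's proof of Theorem \ref{thm_Main} itself invokes the bijectivity of $\operatorname*{tr}$ (for the independence-of-basis claim), so using it as an ingredient here would require excising that step. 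Finally, the injectivity argument is only a sketch: ``the hexagons constrain $h$ enough'' and ``reduce via naturality to finitely generated and then cyclic $G$'' are not proofs --- neither $\operatorname*{Quad}$ nor $H_{ab}^{3}$ decomposes over direct sums without cross terms (a form on $G_{1}\oplus G_{2}$ carries an extra bilinear pairing $G_{1}\otimes G_{2}\rightarrow M$), so the reduction you envisage needs exactly the K\"unneth-type analysis of the Eilenberg--Mac\thinspace Lane papers. As written, the proposal does not establish the theorem; the honest options are to cite it, as the paper does, or to carry out the full classical computation.
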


We refer to \cite{MR0045115} for an overview. The paper \cite{MR0056295} is
essentially entirely devoted to making the type of translation underlying the
above theorem.

\subsection{Joyal--Street classification\label{sect_JoyalStreetClassif}}

We quickly summarize the Joyal--Street classification. Write $\mathcal{Q}uad$
for the $1$-category (a) whose objects are triples $(G,M,q)$ with $G,M$
abelian groups and $q\in\operatorname*{Quad}(G,M)$, and (b) morphisms
$(G,M,q)\rightarrow(G^{\prime},M^{\prime},q^{\prime})$ are pairs of group
homomorphisms $f:G\rightarrow G^{\prime}$ and $g:M\rightarrow M^{\prime}$ such
that the square%
\[%
\xymatrix{
G \ar[r]^{f} \ar[d]_{q} & G^{\prime} \ar[d]^{q^{\prime}} \\
M \ar[r]_{g} & M^{\prime}
}%
\]
commutes. Write $\mathcal{BCG}$ for the $2$-category (a) whose objects are
braided categorical groups and (b) whose arrows are braided monoidal functors
and (c) whose $2$-arrows are braided monoidal natural equivalences of
functors. For details on the precise definition we would perhaps recommend the
nice and very careful treatment of Cegarra--Khmaladze \cite[\S 6]{MR2293318}.
They treat the $G$-graded case, so for $G=1$ the trivial group, their
description specializes to the concepts we use here. We write
$Ho(\mathcal{BCG})$ for what one could call the homotopy category of braided
categorical groups\ (or $1$-category truncation); its objects are braided
categorical groups and morphisms are the equivalence classes of braided
monoidal functors. As homotopy (or $2$-equivalence) preserves composition,
$Ho(\mathcal{BCG})$ is a well-defined $1$-category. There is a functor%
\[
T:Ho(\mathcal{BCG})\longrightarrow\mathcal{Q}uad\text{,}\qquad\qquad
(\mathsf{C},\otimes)\mapsto(\pi_{0}\mathsf{C},\pi_{1}\mathsf{C},q)\text{,}%
\]
where $q$ is the quadratic form attached to the abelian $3$-cohomology class
encoding the associator and braiding, using Equation \ref{lEMMa2}.

\begin{theorem}
[Joyal--Street]\label{thm_JSClassif}The functor $T$ has the following properties.

\begin{enumerate}
\item It is well-defined.

\item It is essentially surjective.

\item It is conservative, i.e. $F:(\mathsf{C},\otimes)\rightarrow
(\mathsf{C}^{\prime},\otimes^{\prime})$ is a braided monoidal equivalence if
and only if $TF$ is an isomorphism in $\mathcal{Q}uad$.

\item It is full, i.e. any morphism of $\mathcal{Q}uad$ comes from a (homotopy
class of) braided monoidal functor(s) of braided categorical groups.
\end{enumerate}
\end{theorem}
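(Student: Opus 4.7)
The plan is to reduce every statement to an explicit skeletal, strictly associative model. By Theorems~\ref{thm_EverySymmMonoidalSkeletalizable} and~\ref{thm_EverySymmMonoidalStrictifiable}, any braided categorical group is braided monoidal equivalent to one that is skeletal and strictly associative (though typically not both at once — but for the purposes of classifying up to equivalence we can combine them freely via further replacements). Such a model is determined by the abelian groups $G=\pi_0\mathsf{C}$, $M=\pi_1\mathsf{C}$, together with an abelian $3$-cocycle $(h,c)\in Z_{ab}^3(G,M)$ encoding the associator and braiding; cohomologous cocycles yield braided monoidal equivalent categories. This correspondence, which is essentially Equation~\ref{lzx1} in the braided case, underlies everything below.

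For (1), a braided monoidal functor $F$ induces group homomorphisms $\pi_0 F$ and $\pi_1 F$. That the square with $q,q'$ commutes follows directly from $q([X])=s_{X,X}\otimes X^{-1}\otimes X^{-1}$: since $F$ preserves $\otimes$, the braiding, and inverses up to coherent isomorphism, $\pi_1 F(q([X]))=q'([FX])$. A braided monoidal natural equivalence does not alter the induced maps on $\pi_0,\pi_1$, so $T$ descends to $Ho(\mathcal{BCG})$. For (2), given $(G,M,q)\in\mathcal{Q}uad$, Theorem~\ref{thm_EMDeepIso} supplies an abelian $3$-cocycle $(h,c)$ with $c(x,x)=q(x)$. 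Build the skeletal strictly associative category $\mathsf{C}_{G,M,(h,c)}$ with object set $G$, $\operatorname{Hom}(x,y)=M$ if $x=y$ and empty otherwise, tensor $x\otimes y:=x+y$ and $m\otimes m':=m+m'$. Define the associator $a_{x,y,z}$ to be the element $h(x,y,z)\in M=\operatorname{Aut}(x+y+z)$ and the braiding $s_{x,y}$ to be $c(x,y)\in M$; the cocycle conditions~(\ref{lx_a_1}), (\ref{lx_a_2}) together with Equation~\ref{lefmu1a} are precisely the pentagon and the two hexagon axioms. The resulting braided categorical group has invariants $(G,M,q)$.

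For (3), if $TF$ is an isomorphism, then $\pi_0 F$ is bijective, so $F$ is essentially surjective. Full faithfulness uses that in a (braided) categorical group every nonempty $\operatorname{Hom}_{\mathsf{C}}(X,Y)$ is a torsor under $\pi_1\mathsf{C}=\operatorname{Aut}(\mathbf{1}_{\mathsf{C}})$, via $\alpha\mapsto (\alpha\otimes\mathrm{id}_X)\circ f_0$ for any chosen $f_0\in\operatorname{Hom}(X,Y)$; $F$ respects this torsor structure and $\pi_1 F$ is bijective, so $F$ is a bijection on those $\operatorname{Hom}$-sets that are nonempty, while $\pi_0 F$ bijective ensures $\operatorname{Hom}(X,Y)\neq\emptyset\Leftrightarrow\operatorname{Hom}(FX,FY)\neq\emptyset$. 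For (4), given $(f,g)\colon(G,M,q)\to(G',M',q')$ in $\mathcal{Q}uad$, replace both sides by skeletal strictly associative models with cocycles $(h,c)$ and $(h',c')$. Via $f$ and $g$ one gets two abelian $3$-cocycles on $G$ with values in $M'$, namely $g_{\ast}(h,c)$ and $f^{\ast}(h',c')$. The compatibility $g\circ q=q'\circ f$ says that under the trace map~\ref{lEMMa2} both map to the same quadratic form, so by injectivity in Theorem~\ref{thm_EMDeepIso} they represent the same class in $H_{ab}^3(G,M')$, differing by an abelian $3$-coboundary $\partial k$ for some normalized $k\colon G^2\to M'$. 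Define $\widetilde{F}$ between the two skeletal models by $x\mapsto f(x)$, $m\mapsto g(m)$, with monoidal constraint $\mu_{x,y}:=k(x,y)\in M'=\operatorname{Aut}(f(x)+f(y))$; the coboundary relations~\ref{lefmu2} and~\ref{lefmu2a} are precisely the hexagons and the braided-monoidal naturality for $\widetilde{F}$, giving the desired lift in $Ho(\mathcal{BCG})$.

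The hardest part will be the dictionary between the cocycle identities of Definition~\ref{def_Ab3Cocycle} and the coherence axioms for braided monoidal functors, in both directions. Most of this bookkeeping is classical (and is the content of the paper~\cite{MR1250465} Joyal--Street cite for their theorem), but checking that the particular shifts and signs in~(\ref{lx_a_1}), (\ref{lx_a_2}) and in~(\ref{lefmu2a}) exactly recover both hexagons, and that coboundaries $\partial k$ transport to bona fide tensoriality data, requires careful verification. Once that dictionary is in hand, all four parts follow by the strategy above, with essential surjectivity and injectivity of the Eilenberg--Mac\thinspace Lane trace doing all the actual work.
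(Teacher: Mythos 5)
Your proposal is correct and follows essentially the route the paper itself takes: the paper does not prove Theorem~\ref{thm_JSClassif} but cites Joyal--Street and Cegarra--Khmaladze, and the sketch it gives immediately afterwards (the skeletal model $\mathcal{T}(G,M,(h,c))$ realizing essential surjectivity, the Eilenberg--Mac\thinspace Lane trace detecting equivalences, coboundaries $k$ furnishing the tensoriality data of functors) is exactly your argument. One caution: you twice mislabel the model $\mathcal{T}(G,M,(h,c))$ as \emph{strictly associative} and say one can ``combine freely'' skeletalization with strictification --- it is skeletal with associator the generally nontrivial automorphism $h(x,y,z)$, and the impossibility of combining the two properties is precisely the subject of the paper's main theorem; this slip is not load-bearing for your proof, since only the skeletal model is actually used, but it should be corrected.
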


See \cite[Theorem 3.3 and Remark 3.3]{MR1250465} or \cite[page 435, again for
$G=1$ and the category is called $\mathcal{H}_{1,ab}^{3}$ instead]{MR2293318}
or \cite[\S 6-7]{joyalstreetpreprint}. S\'{\i}nh had earlier proven an
analogous result in the case of symmetric braidings, i.e. Picard groupoids,
\cite{sinh}.

We can be a bit more precise: The essential surjectivity is proven by
providing an explicit skeletal model with the given invariants:

\begin{definition}
Given abelian groups $G,M$ and an abelian $3$-cocycle $(h,c)$, let
$\mathcal{T}(G,M,(h,c))$ be the following braided categorical group:

\begin{enumerate}
\item objects are elements in $G$,

\item automorphisms of any object $X\in G$ are $\operatorname*{Aut}(X):=M$,

\item there are no morphisms except for automorphisms, and their composition
is addition in $M$,

\item the monoidal structure is%
\[
(X\overset{f}{\longrightarrow}X)\otimes(X^{\prime}\overset{f^{\prime}%
}{\longrightarrow}X^{\prime}):=(X+X^{\prime}\overset{f+f^{\prime}%
}{\longrightarrow}X+X^{\prime})\text{,}%
\]
where addition is just addition in $G$ (on objects) resp. in $M$ (for
$f,f^{\prime}$),

\item the associator%
\[
a_{X,Y,Z}:X\otimes(Y\otimes Z)\overset{\sim}{\longrightarrow}(X\otimes
Y)\otimes Z
\]
is just the automorphism defined by $h(X,Y,Z)\in M$,

\item and analogously for the commutativity constraint and $c(X,Y)$.
\end{enumerate}
\end{definition}

As implied by Theorem \ref{thm_JSClassif} and the remark about how to prove
essential surjectivity, any braided categorical group $(\mathsf{C},\otimes)$
is braided monoidal equivalent to the skeletal braided categorical group
$\mathcal{T}(\pi_{0}\mathsf{C},\pi_{1}\mathsf{C},(h,c))$ for some $[(h,c)]\in
H_{ab}^{3}(\pi_{0}\mathsf{C},\pi_{1}\mathsf{C})$, where $[(h,c)]$ is
well-defined up to abelian $3$-coboundaries. Moreover, this detects braided
monoidal equivalence: A braided monoidal functor $F:(\mathsf{C},\otimes
)\rightarrow(\mathsf{C}^{\prime},\otimes^{\prime})$ is an equivalence if and
only if it induces isomorphisms on $\pi_{0}$, $\pi_{1}$ and under these the
cohomology class gets identified. For Picard groupoids, these results were
established earlier by S\'{\i}nh. The functor $T$ can also be used to describe
various unstable homotopy types combinatorially by combining it with Equation
\ref{lzx1}. See \cite{MR1096295} for more on this.

\begin{remark}
\label{rmk_StrictlyAssociativeAndSkeletal}We see that a skeletal braided
categorical group is braided monoidal equivalent to a skeletal strictly
associative one if and only if the cohomology class $[(h,c)]$ has an abelian
$3$-cocycle representative with $h(x,y,z)=0$ for all $x,y,z\in G$.
\end{remark}

\begin{proof}
Just use that any skeletal braided categorical group $\mathsf{C}$ is
necessarily of the form $\mathcal{T}(\pi_{0}\mathsf{C},\pi_{1}\mathsf{C}%
,(h,c))$ itself.
\end{proof}

\section{Polar forms\label{sect_Polar}}

\subsection{Polar quadratic forms}

\begin{definition}
\label{def_PolarQuadraticMap}We call a quadratic form \emph{polar} (or
\emph{polar for} $t$) if its polarization is of the shape%
\[
b(x,y)=t(x,y)+t(y,x)
\]
for some bilinear form $t$ (which is not required to be symmetric or non-degenerate).
\end{definition}

As we shall see in Lemma \ref{lemma_CharPolar}, this definition uniformly
characterizes two somewhat different sources of examples in one.

\begin{example}
\label{example_Polar1}Suppose $t:G\times G\rightarrow M$ is a $\mathbb{Z}%
$-bilinear form, not necessarily symmetric. Then $q(x):=t(x,x)$ is a polar
quadratic form by direct computation,%
\[
b(x,y)=t(x,y)+t(y,x)\text{.}%
\]

\end{example}

\begin{example}
\label{example_Polar2}Suppose $s:G\rightarrow M$ is any group homomorphism to
an $\mathbb{F}_{2}$-vector space $M$. Then $q(x):=s(x)$ is a polar quadratic
form because $b(x,y)=q(x+y)-q(x)-q(y)=0$.
\end{example}

\begin{example}
Consider $q(x):=x^{2}$ as a map $\mathbb{Z}\rightarrow\mathbb{Z}$. This
restricts to a well-defined map $\mathbb{Z}/2\rightarrow\mathbb{Z}/4$ in view
of%
\[
(x+2m)^{2}=x^{2}+4mx+4m^{2}\equiv x^{2}\operatorname{mod}4\text{.}%
\]
Thus, for $G:=\mathbb{Z}/2$ and $M:=\mathbb{Z}/4$ we obtain a quadratic form,
$q(-x)=q(x)$, with the bilinear symmetric polarization $b(x,y):=2xy$. We claim
that this form is \emph{not} polar. Assume it were. We must have
$t:\mathbb{F}_{2}\times\mathbb{F}_{2}\rightarrow\mathbb{Z}/4\mathbb{Z}$ and
$t(x,y)=xyn$ for some $n\in\mathbb{Z}$ by bilinearity, and thus without loss
of generality $n=1$ as we need $b(x,y)=t(x,y)+t(y,x)$. However, $(x,y)\mapsto
xy$ is not bilinear as a map $\mathbb{F}_{2}\times\mathbb{F}_{2}%
\rightarrow\mathbb{Z}/4\mathbb{Z}$: We have%
\[
0=c(0,1)=c(1+1,1)\neq c(1,1)+c(1,1)=2\qquad\text{in}\qquad\mathbb{Z}/4\text{.}%
\]
On the other hand, define%
\[
h(x,y,z):=\left\{
\begin{array}
[c]{ll}%
2 & \text{for }x=y=z=1\\
0 & \text{else}%
\end{array}
\right.
\]
and $c(x,y):=xy$, which is just%
\[
c(x,y):=\left\{
\begin{array}
[c]{ll}%
1 & \text{for }x=y=z=1\\
0 & \text{else.}%
\end{array}
\right.
\]
Then $(h,c)$ is an abelian $3$-cocycle. We repeat that $c$ is not bilinear,
contrary to what the trained eye might misconceive.
\end{example}

\begin{lemma}
\label{lemma_QuadMapChar}An arbitrary map of sets $q:G\rightarrow M$ is
quadratic if and only if%
\begin{align}
q(-x)  &  =q(x)\label{lefmu2c}\\
q(x+y+z)+q(x)+q(y)+q(z)  &  =q(y+z)+q(z+x)+q(x+y) \label{lefmu2ca}%
\end{align}
both hold for all $x,y,z\in G$.
\end{lemma}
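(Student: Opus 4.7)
The plan is to prove both directions by direct algebraic manipulation of the defining identity $b(x,y) := q(x+y) - q(x) - q(y)$, which is symmetric in $x,y$ straight from its definition. The condition $q(-x)=q(x)$ is part of the definition of quadratic form on one side and is an explicit hypothesis on the other, so in both directions the real work is to translate between $\mathbb{Z}$-bilinearity of $b$ and the three-variable identity in Equation \ref{lefmu2ca}.

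For the forward direction, I would assume $q$ is quadratic, so that $b$ is $\mathbb{Z}$-bilinear (and symmetric). Expanding $q(x+y+z)$ in stages via $q(x+y+z)=q(x+y)+q(z)+b(x+y,z)$ and $q(x+y)=q(x)+q(y)+b(x,y)$, and splitting $b(x+y,z)=b(x,z)+b(y,z)$ using bilinearity, the left-hand side of Equation \ref{lefmu2ca} becomes
\[
2q(x)+2q(y)+2q(z)+b(x,y)+b(x,z)+b(y,z).
\]
Expanding each of $q(y+z),q(z+x),q(x+y)$ on the right-hand side directly from the definition of $b$, and using its symmetry, yields the same expression, so the identity holds.

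For the converse, I would assume both Equation \ref{lefmu2c} and Equation \ref{lefmu2ca}. Define $b(x,y):=q(x+y)-q(x)-q(y)$; symmetry of $b$ is immediate. Substituting $(x,x',y)$ in place of $(x,y,z)$ in Equation \ref{lefmu2ca} and rearranging makes the difference $b(x+x',y)-b(x,y)-b(x',y)$ collapse term by term to zero, proving additivity in the first variable (and by symmetry in the second). Setting $x=y=z=0$ in Equation \ref{lefmu2ca} yields $4q(0)=3q(0)$, so $q(0)=0$ and hence $b(0,y)=0$; combined with additivity, $0=b(x+(-x),y)=b(x,y)+b(-x,y)$, so $b(-x,y)=-b(x,y)$. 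This upgrades additivity to full $\mathbb{Z}$-bilinearity, and with the hypothesized $q(-x)=q(x)$ we conclude that $q$ is quadratic.

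The only conceptual step is recognizing Equation \ref{lefmu2ca} as encoding bilinearity of $b$ through the substitution $y\mapsto x'$, $z\mapsto y$; after that, both directions are routine term-by-term bookkeeping with the definition of $b$, so I do not expect any genuine obstacle.
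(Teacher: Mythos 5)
Your proof is correct and follows essentially the same route as the paper: both identify Equation \ref{lefmu2ca} with the vanishing of $b(x+y,z)-b(x,z)-b(y,z)$ after unravelling $b$ via its definition, with symmetry of $b$ being automatic. Your additional remarks (the forward expansion of $q(x+y+z)$, and upgrading biadditivity to $\mathbb{Z}$-bilinearity via $q(0)=0$ and $b(-x,y)=-b(x,y)$) only make explicit routine steps the paper leaves implicit.
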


\begin{proof}
We demand $q(-x)=q(x)$ in both situations, so we only need to show that
Equation \ref{lefmu2ca} is equivalent to the $\mathbb{Z}$-bilinearity of
$b(x,y)$ in Equation \ref{lefmu2b}. Since the latter is visibly symmetric, it
suffices to check linearity in the first variable, that is, the vanishing of%
\[
b(x+y,z)-b(x,z)-b(y,z)
\]
for all $x,y,z$. When unravelling each $b(-,-)$ using Equation \ref{lefmu2b},
we obtain an expression which is zero (literally) if and only if Equation
\ref{lefmu2ca} holds.
\end{proof}

\begin{lemma}
If $(h,c)$ is an abelian $3$-cocycle, we have%
\[
\sum_{\sigma\in S_{3}}\operatorname*{sgn}(\sigma)h(x_{\sigma(1)},x_{\sigma
(2)},x_{\sigma(3)})=0\text{,}%
\]
i.e. the alternating sum over all permutations of the arguments is zero.
\end{lemma}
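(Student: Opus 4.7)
The plan is to combine the two hexagon identities (A) and (A') applied with suitable permutations of arguments, so that the six permutations of $h$ assemble on the left with the correct signs; the residual $c$-terms will then be handled via a separate additivity identity for the symmetrization of $c$. Throughout, write $B(u,v) := c(u,v) + c(v,u)$.

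First I would rearrange (A) into the form
\[
h(x,y,z) + h(y,z,x) - h(y,x,z) = c(x,z) + c(x,y) - c(x, y+z),
\]
then apply (A') with $(x,y,z)$ replaced by $(z,y,x)$ and rearrange to obtain
\[
h(z,x,y) - h(z,y,x) - h(x,z,y) = c(z,x) + c(y,x) - c(y+z, x).
\]
Adding these, each permutation $\sigma \in S_{3}$ appears on the left with coefficient $\operatorname{sgn}(\sigma)$, and the right-hand side reorganizes as
\[
B(x,y) + B(x,z) - B(x, y+z).
\]

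Second, I would prove the additivity $B(x, y+z) = B(x,y) + B(x,z)$ by a parallel manipulation of the same axioms: solve (A) directly for $c(x, y+z)$, and solve (A') applied with $(x,y,z) \mapsto (y,z,x)$ for $c(y+z, x)$. On adding the two resulting expressions, the six $h$-contributions cancel in pairs, leaving
\[
c(x, y+z) + c(y+z, x) = c(x,y) + c(y,x) + c(x,z) + c(z,x),
\]
which is precisely the desired equality. Substituting this into the identity from Step~1 yields the vanishing of the alternating sum.

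The main obstacle is purely combinatorial: one must pick the unique substitutions of arguments (reversing the triple in exactly one hexagon in Step~1, and cyclically shifting by $(x,y,z) \mapsto (y,z,x)$ in the corresponding hexagon in Step~2) so that, in the first case, the six permutations line up with the correct signs and, in the second case, the $h$-terms cancel rather than compound. Notably, neither the group $3$-cocycle condition on $h$ nor any normalization assumption on $(h,c)$ enters the argument --- the two hexagons alone suffice.
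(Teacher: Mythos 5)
Your proof is correct, and all the substitutions check out: adding the rearranged Identity \ref{lx_a_1} to Identity \ref{lx_a_2} evaluated on the reversed triple does produce the alternating sum on the left and $B(x,y)+B(x,z)-B(x,y+z)$ on the right, and your second combination (Identity \ref{lx_a_1} solved for $c(x,y+z)$ plus Identity \ref{lx_a_2} on the cyclically shifted triple solved for $c(y+z,x)$) indeed makes the six $h$-terms cancel pairwise, giving the additivity of $B$. However, your route differs from the paper's. The paper uses only Identity \ref{lx_a_1}: rearranged, it reads $c(x,y+z)-c(x,y)-c(x,z)=-h(x,y,z)+h(y,x,z)-h(y,z,x)$, and since the left-hand side is manifestly symmetric under swapping $y$ and $z$, the difference of the right-hand side under that swap --- which is precisely the alternating sum --- must vanish; no appeal to Identity \ref{lx_a_2} is needed. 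Your argument instead interprets the alternating sum as the defect of additivity of the symmetrization $B(x,y)=c(x,y)+c(y,x)$ and then kills that defect, which is, up to relabeling, exactly the content of the paper's Lemma \ref{lemma_v2} (bilinearity of $W$), proved there separately by a very similar manipulation. So your approach is slightly longer and uses both hexagons, but it delivers the bilinearity of the symmetrized braiding as a byproduct, whereas the paper's proof of this particular lemma is shorter and more economical. Your closing observation that neither the group $3$-cocycle condition nor normalization is needed matches the paper.
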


\begin{proof}
In Identity \ref{lx_a_1} bring all terms relying on $c$ on the left side,
giving%
\[
c(x,y+z)-c(x,y)-c(x,z)=-h(x,y,z)+h(y,x,z)-h(y,z,x)\text{.}%
\]
The left side of the equation is invariant under swapping $y$ and $z$, and
thus so must be the right side. Thus, their difference%
\[
-h(x,y,z)+h(y,x,z)-h(y,z,x)+h(x,z,y)-h(z,x,y)+h(z,y,x)=0
\]
must be zero. But this is what we had to prove.
\end{proof}

\begin{lemma}
\label{lemma_v2}If $(h,c)$ is an abelian $3$-cocycle, then
$W(x,y):=c(x,y)+c(y,x)$ is $\mathbb{Z}$-bilinear and symmetric.
\end{lemma}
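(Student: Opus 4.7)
Symmetry of $W$ is immediate from the definition: $W(x,y)=c(x,y)+c(y,x)=c(y,x)+c(x,y)=W(y,x)$, so I only have to prove additivity in, say, the first argument. The idea is that identities \ref{lx_a_1} and \ref{lx_a_2} individually express the defect to bilinearity of $c$ in each variable as a sum of $h$-terms, and when one symmetrizes $c$ to form $W$, the $h$-contributions coming from the two identities cancel.

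Concretely, I would rearrange \ref{lx_a_1} (with variables $x,y,z$) to the form
\[
c(x,y+z)-c(x,y)-c(x,z)=h(y,x,z)-h(y,z,x)-h(x,y,z),
\]
and similarly rearrange \ref{lx_a_2} to read
\[
c(x+y,z)-c(x,z)-c(y,z)=h(z,x,y)-h(x,z,y)+h(x,y,z).
\]
To prove $W(x+y,z)=W(x,z)+W(y,z)$, I split
\[
W(x+y,z)-W(x,z)-W(y,z)=\bigl[c(x+y,z)-c(x,z)-c(y,z)\bigr]+\bigl[c(z,x+y)-c(z,x)-c(z,y)\bigr].
\]
The first bracket is evaluated by \ref{lx_a_2} with arguments $(x,y,z)$, and the second bracket by \ref{lx_a_1} with arguments $(z,x,y)$; substituting the two displayed formulas, every $h$-term on the right appears once with a $+$ and once with a $-$, and the total is $0$. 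This yields additivity in the first variable, and combined with the symmetry $W(x,y)=W(y,x)$ established first, one immediately gets additivity in the second variable as well, so $W$ is $\mathbb{Z}$-bilinear.

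The only subtle point is matching the two rearrangements correctly so that the six $h$-terms pair up and cancel; once the two displayed consequences of \ref{lx_a_1} and \ref{lx_a_2} are written out, the rest is mechanical. Note that the previous lemma (the alternating sum over $S_3$ of the $h$'s vanishes) is not needed here, but it gives an independent sanity check: the cancellation we observe is a special case of the same kind of symmetry phenomenon.
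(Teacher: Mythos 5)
Your proposal is correct and follows essentially the same route as the paper: symmetry of $W$ is immediate, and the additivity defect of $W$ in one slot is split into the defect of $c$ in its first argument (handled by Identity \ref{lx_a_2}) plus the defect in its second argument (handled by Identity \ref{lx_a_1}), after which the six $h$-terms cancel in pairs. The only difference from the paper's write-up is cosmetic variable naming, so nothing further is needed.
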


\begin{proof}
Since $W$ is symmetric in $x,y$, it suffices to check linearity in $x$. We
find%
\[
W(x+z,y)-W(x,y)-W(z,y)=c(x+z,y)+c(y,x+z)-c(x,y)-c(y,x)-c(z,y)-c(y,z)
\]
and evaluating $c(x+z,y)$ using Identity \ref{lx_a_2} (and observing a number
of cancellations), we obtain%
\[
=h(x,z,y)-h(x,y,z)+h(y,x,z)+c(y,x+z)-c(y,x)-c(y,z)\text{.}%
\]
Next, unravel $c(y,x+z)$ using Identity \ref{lx_a_1}, giving%
\[
=h(x,z,y)-h(x,y,z)+h(y,x,z)-h(y,x,z)+h(x,y,z)-h(x,z,y)=0\text{,}%
\]
which vanishes literally (all terms appear twice, but with opposite signs).
\end{proof}

\begin{lemma}
\label{lemma_v3}If $(h,c)$ is an abelian $3$-cocycle, then we have%
\[
c(y+z,y+z)-c(y,y)-c(z,z)=W(y,z)
\]
for all $y,z$ and $W$ as in the previous lemma.
\end{lemma}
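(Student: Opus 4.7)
The identity $c(y+z,y+z)-c(y,y)-c(z,z)=W(y,z)$ is a diagonal-linearity statement for $c$, and my plan is to unfold the left-hand side using the hexagon-type Identities \ref{lx_a_1} and \ref{lx_a_2}, and then argue that the spurious $h$-contributions cancel because $h$ is a group $3$-cocycle.

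First I would apply Identity \ref{lx_a_1} with $x$ replaced by $y+z$ (and $y,z$ kept as themselves). This rewrites $c(y+z,y+z)$ as a signed combination of $c(y+z,y)$, $c(y+z,z)$, and three $h$-terms. Next I would apply Identity \ref{lx_a_2} twice: once with $(x,y,z)=(y,z,z)$ to solve for $c(y+z,z)=c(y,z)+c(z,z)+h(z,y,z)$, and once with $(x,y,z)=(y,z,y)$ to solve for $c(y+z,y)=c(y,y)+c(z,y)+h(y,z,y)$. In each case the two copies of $h(y,z,z)$, respectively $h(y,y,z)$, cancel on the nose, leaving a single $h$-correction. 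Substituting these back yields
\[
c(y+z,y+z) \;=\; c(y,y)+c(z,z)+c(y,z)+c(z,y)+R,
\]
where
\[
R \;=\; h(y,z,y)+h(z,y,z)+h(y,y+z,z)-h(y,z,y+z)-h(y+z,y,z).
\]
Recognising the four middle $c$-values as $W(y,z)$ reduces the statement to the purely cohomological claim $R=0$.

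The one genuinely non-mechanical step is spotting the right instance of the group $3$-cocycle condition \ref{lefmu1a} for $h$. Evaluating \ref{lefmu1a} at $(u,x,y,z)=(y,z,y,z)$ produces exactly
\[
h(z,y,z)+h(y,y+z,z)+h(y,z,y) \;=\; h(y,z,y+z)+h(y+z,y,z),
\]
and the difference of the two sides is precisely $R$. Hence $R=0$ and the lemma follows. Everything outside the clever choice of specialization is routine substitution, essentially in the spirit of the computations already carried out in Lemmas \ref{lemma_v2} and of the preceding unnamed lemma.
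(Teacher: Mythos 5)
Your proposal is correct and follows essentially the same route as the paper: specialize Identity \ref{lx_a_1} at $x:=y+z$, unravel $c(y+z,y)$ and $c(y+z,z)$ via Identity \ref{lx_a_2}, and kill the leftover $h$-terms with the group $3$-cocycle condition \ref{lefmu1a} at $u:=y$, $x:=z$ (your $(u,x,y,z)=(y,z,y,z)$). Only a cosmetic quibble: what equals $W(y,z)$ is the pair $c(y,z)+c(z,y)$, not ``four'' $c$-values, since $c(y,y)$ and $c(z,z)$ are moved to the left-hand side.
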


\begin{proof}
We again use Identity \ref{lx_a_1}, in the form%
\[
c(x,y+z)-c(x,y)-c(x,z)=-h(x,y,z)+h(y,x,z)-h(y,z,x)\text{.}%
\]
Now we plug in $x:=y+z$. This yields%
\[
c(y+z,y+z)-c(y+z,y)-c(y+z,z)=-h(y+z,y,z)+h(y,y+z,z)-h(y,z,y+z)
\]
and now use Identity \ref{lx_a_2} to unravel $c(y+z,y)$ and $c(y+z,z)$. We get%
\begin{align*}
&  c(y+z,y+z)-c(z,y)-c(y,y)-c(y,z)-c(z,z)\\
&  \qquad=-h(y+z,y,z)+h(y,y+z,z)-h(y,z,y+z)\\
&  \qquad\qquad+h(z,y,z)+h(y,z,y)\text{.}%
\end{align*}
Recall the group $3$-cocycle condition, Equation \ref{lefmu1a}, but plug in
$u:=y$ and $x:=z$. Then the cocycle condition tells us that the right side of
this equation vanishes. This proves our claim.
\end{proof}

\begin{lemma}
\label{lemma_TraceIsQuadratic}If $(h,c)$ is an abelian $3$-cocycle, then
$q(x):=c(x,x)$ is a quadratic form.
\end{lemma}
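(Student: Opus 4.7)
The plan is to combine the two preceding lemmas and extract the two conditions characterizing a quadratic form: the evenness $q(-x)=q(x)$, and the $\mathbb{Z}$-bilinearity of the polarization $b(x,y):=q(x+y)-q(x)-q(y)$.

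First I would address bilinearity of $b$, which is the immediate pay-off of the previous two lemmas. Indeed, by Lemma \ref{lemma_v3} we have
\[
b(y,z)=q(y+z)-q(y)-q(z)=c(y+z,y+z)-c(y,y)-c(z,z)=W(y,z),
\]
and Lemma \ref{lemma_v2} tells us that $W$ is $\mathbb{Z}$-bilinear (and symmetric). So $b$ is $\mathbb{Z}$-bilinear for free.

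Second, I would dispose of $q(-x)=q(x)$. This takes a small preliminary step: one first shows that $c$ is normalized, i.e.\ $c(x,0)=c(0,y)=0$, by feeding $y=z=0$ into Identity \ref{lx_a_1} and $x=y=0$ into Identity \ref{lx_a_2} and using that $h$ is already known to vanish whenever one of its arguments is $0$. In particular $q(0)=c(0,0)=0$. Now applying Lemma \ref{lemma_v3} with $(y,z):=(x,-x)$ gives
\[
-q(x)-q(-x)=q(0)-q(x)-q(-x)=W(x,-x).
\]
Since $W$ is bilinear and $W(x,0)=c(x,0)+c(0,x)=0$, we get $W(x,-x)=-W(x,x)=-2q(x)$, so $-q(x)-q(-x)=-2q(x)$, hence $q(-x)=q(x)$.

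Putting the two steps together, $q$ satisfies both defining properties of a quadratic form, and its polarization is the symmetric bilinear form $W$. The only mildly subtle step is the evenness $q(-x)=q(x)$, since one has to first extract normalization of $c$ from the cocycle identities before Lemma \ref{lemma_v3} can be specialized to $(y,z)=(x,-x)$; everything else is just reading off what Lemmas \ref{lemma_v2} and \ref{lemma_v3} already say.
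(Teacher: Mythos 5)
Your proof is correct, but it takes a more direct route than the paper. For the bilinearity of the polarization you simply read off from Lemma \ref{lemma_v3} that $b(y,z)=W(y,z)$ and quote Lemma \ref{lemma_v2}; the paper instead first assembles the three-variable identity $q(x+y+z)+q(x)+q(y)+q(z)=q(x+y)+q(y+z)+q(x+z)$ (using Lemma \ref{lemma_v3} several times together with the bilinearity of $W$) and then invokes the characterization of quadratic maps in Lemma \ref{lemma_QuadMapChar}, so your argument verifies the definition directly while the paper routes through that auxiliary identity, which it also reuses in its Step 2. For the evenness, the paper specializes the three-variable identity to $y=-x$, $z=x$ and combines $3q(x)+q(-x)=2q(0)+q(2x)$ with $q(0)=0$ and $q(2x)-2q(x)=W(x,x)$; you instead specialize Lemma \ref{lemma_v3} to $(y,z)=(x,-x)$ and use $W(x,-x)=-W(x,x)=-2q(x)$, which is shorter. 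One small economy: your preliminary normalization of $c$ (correct as stated, and legitimately using the Remark that $h$ vanishes whenever an argument is $0$) is not actually needed, since $q(0)=0$ already follows from Lemma \ref{lemma_v3} with $y=z=0$ together with $W(0,0)=0$ by bilinearity --- which is exactly how the paper gets it --- and likewise $W(x,0)=0$ is automatic from bilinearity of $W$. So both arguments are sound; yours buys brevity by exploiting that Lemma \ref{lemma_v3} literally computes the polarization, while the paper's detour through Lemma \ref{lemma_QuadMapChar} packages the quadraticity condition in the symmetric three-variable form that it prefers to manipulate.
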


This quadratic form is called the \emph{trace} and underlies the map of
Equation \ref{lEMMa}. The above lemma is of course due to Eilenberg and
Mac\thinspace Lane, and quite old, but since almost all texts just refer to
this verification as an \textsl{Exercise}, we felt we should spell it out, if
only to provide a reference.

\begin{proof}
\textit{(Step 1)} We compute, just by unravelling $q$ in terms of $c$ and
using Lemma \ref{lemma_v3} three times, that $q(x+y)+q(y+z)+q(x+z)$ is%
\begin{align}
&  =c(x+y,x+y)+c(y+z,y+z)+c(x+z,x+z)\nonumber\\
&  =2c(x,x)+2c(y,y)+2c(z,z)+W(x,y)+W(y,z)+W(x,z)\text{.} \label{lmef1}%
\end{align}
Next, use Lemma \ref{lemma_v3} but plug in $x+y$ instead of $y$, giving%
\[
c(x+y+z,x+y+z)-c(x+y,x+y)-c(z,z)=W(x+y,z)\text{.}%
\]
Apply Lemma \ref{lemma_v3} to unravel $c(x+y,x+y)$ and use the bilinearity of
$W$ (Lemma \ref{lemma_v2}), giving%
\[
c(x+y+z,x+y+z)-c(x,x)-c(y,y)-c(z,z)=W(x,y)+W(x,z)+W(y,z)\text{.}%
\]
Now, we plug in Equation \ref{lmef1} for the $W$-terms in the above equation,
giving%
\begin{equation}
q(x+y+z)+q(x)+q(y)+q(z)=q(x+y)+q(y+z)+q(x+z)\text{.} \label{lmef2}%
\end{equation}
Thus, by Lemma \ref{lemma_QuadMapChar}, we are done once we show that
$q(x)=q(-x)$.\newline\textit{(Step 2) }To this end, use Equation \ref{lmef2}
with $y:=-x$, $z:=x$. We get%
\begin{equation}
3q(x)+q(-x)=2q(0)+q(2x)\text{.} \label{lmefa}%
\end{equation}
Lemma \ref{lemma_v3} with all variables zero shows $-q(0)=W(0,0)=0$ by the
bilinearity of $W$. Next, use Lemma \ref{lemma_v3} with $y=z$ and call the
variable $x$. We get $q(2x)-2q(x)=W(x,x)$. Combine all three formulas to
obtain $q(x)+q(-x)=W(x,x)$, but unravelling $W$ using its definition yields
$2q(x)$. Thus, we obtain $q(-x)=q(x)$, as desired.
\end{proof}

The following goes back to J. H. C. Whitehead \cite{MR35997}.

\begin{lemma}
\label{lemma_WhiteheadExactness}There is a commutative diagram%
\[%
\xymatrix{
& & \operatorname*{Hom}(G\otimes G,M) \ar[d]^{h} \ar[dr]^{\operatorname{sym}}
\\
0 \ar[r] & \operatorname*{Hom}(G/2G,M) \ar@{^{(}->}[r]^-{\psi} & \operatorname
*{Quad}(G,M) \ar[r]^-{\varphi} & \operatorname*{Hom}(G\otimes G,M) \\
0 \ar[r] & H^3_{sym}(G,M) \ar[u]^{\cong} \ar@{^{(}->}[r] & H^3_{ab}%
(G,M) \ar[u]^{\cong} \text{,} \\
}%
\]
where $\psi$ sends a map to itself and $\varphi$ sends $q$ to its
polarization, Equation \ref{lefmu2b}. The downward arrow $h$ sends a bilinear
map $B$ to $q(x):=B(x,x)$, and the diagonal arrow is the symmetrization map%
\[
b(x,y):=B(x,y)+B(y,x)\text{.}%
\]
The middle row in the diagram is exact.\ The upward arrows send $(h,c)$ to
$x\mapsto c(x,x)$.
\end{lemma}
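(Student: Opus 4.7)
The plan is to break the lemma into four assertions — well-definedness of each arrow, commutativity of the displayed regions, exactness of the middle row, and the two vertical isomorphisms — and tackle them separately. The rightmost vertical iso is exactly Theorem \ref{thm_EMDeepIso}, so only the remaining pieces require work. I would proceed in the order (a) well-definedness plus the top triangle, (b) exactness of the middle row, (c) commutativity of the left square and the left vertical iso.

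For (a), a homomorphism $s\colon G/2G\to M$ pulled back along the projection $G\twoheadrightarrow G/2G$ gives a map $G\to M$ with $s(-x)=s(x)$ (since $-x\equiv x\bmod 2G$) and vanishing polarization $s(x+y)-s(x)-s(y)=0$; so $\psi(s)\in\operatorname*{Quad}(G,M)$. The arrow $\varphi$ is well-defined because the polarization is $\mathbb{Z}$-bilinear by the very definition of a quadratic form, and $h$ lands in quadratic forms by Example \ref{example_Polar1}. The top triangle commutes by a one-line computation, $\varphi(h(B))(x,y)=B(x+y,x+y)-B(x,x)-B(y,y)=B(x,y)+B(y,x)=\operatorname{sym}(B)(x,y)$.

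For (b), injectivity of $\psi$ and the identity $\varphi\circ\psi=0$ are immediate from (a). For the reverse inclusion $\ker(\varphi)\subseteq\operatorname{im}(\psi)$: if $q\in\operatorname*{Quad}(G,M)$ has vanishing polarization, then $q(x+y)=q(x)+q(y)$, so $q$ is a group homomorphism $G\to M$, forcing $q(-x)=-q(x)$; combined with $q(-x)=q(x)$ from the definition of a quadratic form this gives $2q(x)=0$ for all $x$, so $q$ vanishes on $2G$ and factors through $G/2G$.

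For (c), commutativity of the left square uses that for a symmetric $3$-cocycle $(h,c)$ the quantity $W(x,y):=c(x,y)+c(y,x)$ vanishes by antisymmetry, and then Lemma \ref{lemma_v3} forces the polarization of $x\mapsto c(x,x)$ to vanish; hence the trace lands inside $\operatorname*{Hom}(G/2G,M)\hookrightarrow\operatorname*{Quad}(G,M)$. Injectivity of the left vertical follows because $H^{3}_{sym}\hookrightarrow H^{3}_{ab}$ is injective (symmetric and abelian $3$-coboundaries coincide by Definition \ref{def_SymmetricCocycles}) and the right vertical is an iso by Theorem \ref{thm_EMDeepIso}. The main obstacle is surjectivity: given $s\in\operatorname*{Hom}(G/2G,M)$ one must construct a symmetric $3$-cocycle with trace $s$. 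I would exhibit the explicit representative $h:=0$ and $c(x,y):=\sum_{i\in I}\overline{x}_{i}\overline{y}_{i}s(\beta_{i})$ for a chosen basis $(\beta_{i})$ of $G/2G$; antisymmetry holds because each $s(\beta_{i})$ is $2$-torsion in $M$, the cocycle conditions \ref{lx_a_1} and \ref{lx_a_2} reduce to the visible $\mathbb{Z}$-bilinearity of this $c$ (since $h=0$), and the trace identity $c(x,x)=s(x\bmod 2G)$ follows from $\overline{x}_{i}^{\,2}=\overline{x}_{i}$ in $\mathbb{F}_{2}$ together with $\mathbb{F}_{2}$-linearity of $s$. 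This is the $t=0$ specialization of the construction sketched in the announced Theorem~D.
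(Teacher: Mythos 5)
Your argument is correct, and most of it coincides with the paper's: the top triangle by the same one-line computation, exactness of the middle row by the same reasoning ($q$ additive, hence linear, hence $2q=0$ from $q(-x)=q(x)=-q(x)$, hence factoring through $G/2G$), the inclusion $H^{3}_{sym}\hookrightarrow H^{3}_{ab}$ from the coincidence of symmetric and abelian $3$-coboundaries, and the lower square from antisymmetry of $c$ killing $W(x,y)=c(x,y)+c(y,x)$ so that the trace has vanishing polarization. Where you genuinely diverge is the left vertical isomorphism: the paper simply cites Theorem \ref{thm_EMDeepIso} (Eilenberg--Mac\thinspace Lane) for \emph{both} upward arrows, i.e. it takes the classical identification $H^{3}_{sym}(G,M)\cong\operatorname*{Hom}(G/2G,M)$ as an input, whereas you only use the abelian-case isomorphism and prove the symmetric case yourself --- injectivity by the diagram chase through $H^{3}_{ab}$ (using that the two coboundary notions agree), surjectivity by exhibiting the explicit representative $h=0$, $c(x,y)=\sum_{i}\overline{x}_{i}\overline{y}_{i}s(\beta_{i})$, with the $2$-torsion of the values $s(\beta_{i})$ giving both antisymmetry and $\mathbb{Z}$-bilinearity, and $\overline{x}_{i}^{2}=\overline{x}_{i}$ giving the trace identity. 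This is in effect the $t=0$ case of Theorem \ref{thm_Main} proved ahead of time; since you verify the cocycle conditions and the trace directly rather than invoking that theorem (which in the paper relies on this lemma), there is no circularity. Your route buys a more self-contained proof that leans less on the symmetric part of the Eilenberg--Mac\thinspace Lane computation, at the cost of a (harmless, basis-dependent) explicit construction; the paper's route is shorter because it delegates exactly that piece to the cited classical result.
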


\begin{proof}
The commutativity of the upper triangle is a one line computation. We check
the exactness of the middle row: If $f:G/2\rightarrow M$ is any $\mathbb{Z}%
$-linear map, we clearly have $b(x,y)=0$ in Equation \ref{lefmu2b} by
linearity and $f(a)=f(-a)$ holds since all elements in $G/2$ are $2$-torsion,
so $\psi$ is well-defined and injective (see also Example \ref{example_Polar2}%
). If $\varphi$ sends a quadratic form to zero, we have $q(x+y)=q(x)+q(y)$, so
it follows that $q$ is $\mathbb{Z}$-linear. Moreover, $q(x)=q(-x)=-q(x)$
forces that $2q(x)=0$ for all $x$, but this is $q(2x)$. Hence, $2G$ lies in
the kernel and by the universal property of cokernels, we get a unique map
$f:G/2G\rightarrow M$, giving exactness in the middle. The inclusion
$H_{sym}^{3}(G,M)\subseteq H_{ab}^{3}(G,M)$ follows from the fact that while
symmetric $3$-cocycles have the additional constraint%
\begin{equation}
c(x,y)=-c(y,x)\text{,} \label{lefmu1b}%
\end{equation}
the sets of symmetric vs. abelian $3$-coboundaries are the same (Definition
\ref{def_SymmetricCocycles}). The lower square commutes because the upward
arrows are the same map, just restricted to a subgroup, and on all of
$H_{ab}^{3}(G,M)$ its values are quadratic forms by Lemma
\ref{lemma_TraceIsQuadratic}. Further, Equation \ref{lefmu1b} implies
$2c(x,x)=0$, so the image of the subgroup $H_{sym}^{3}(G,M)$ lands in those
quadratic forms such that the polarization is zero, so it factors over
$\operatorname*{Hom}(G/2G,M)$ by the exactness of the middle row. The upward
arrows being isomorphisms follows from Theorem \ref{thm_EMDeepIso}.
\end{proof}

\begin{lemma}
\label{lemma_FormsOnFreeAbelianGroupArePolar}Suppose $G$ is a free abelian
group, $M$ arbitrary. Then every quadratic form $q:G\rightarrow M$ is polar.
\end{lemma}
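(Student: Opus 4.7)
The plan is to produce an explicit bilinear form $t: G \times G \to M$ with $b(x,y) = t(x,y) + t(y,x)$ by choosing a basis and defining $t$ by an ``upper triangular'' recipe on basis pairs. Concretely, since $G$ is free abelian, fix a basis $(e_i)_{i \in I}$ and a total order on $I$ (by well-ordering), so that every $\mathbb{Z}$-bilinear form on $G \times G$ is determined by an arbitrary choice of values on pairs of basis elements. I want to define
\[
t(e_i, e_j) := \begin{cases} b(e_i, e_j) & \text{if } i < j, \\ q(e_i) & \text{if } i = j, \\ 0 & \text{if } i > j, \end{cases}
\]
and extend $\mathbb{Z}$-bilinearly.

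The key preliminary step is to establish that $b(x,x) = 2q(x)$ for every $x \in G$. This comes for free from the definition of a quadratic form: using $q(0) = 0$ (which follows by setting $y = 0$ in the polarization identity and invoking bilinearity in the second slot), the identity $0 = q(0) = q(x + (-x)) = q(x) + q(-x) + b(x,-x)$ combined with $q(-x) = q(x)$ and the bilinearity relation $b(x,-x) = -b(x,x)$ yields $b(x,x) = 2q(x)$. In particular $b(e_i, e_i) = 2 q(e_i)$ for each basis element.

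Given this, the verification that $t(x,y) + t(y,x) = b(x,y)$ reduces, by bilinearity of both sides, to checking it on basis pairs $(e_i, e_j)$. The three cases are straightforward: for $i < j$ one gets $b(e_i,e_j) + 0 = b(e_i, e_j)$; the symmetric case $i > j$ gives $0 + b(e_j, e_i) = b(e_i, e_j)$ using the symmetry of $b$; and the diagonal case $i = j$ gives $2 q(e_i) = b(e_i, e_i)$, precisely by the identity established above.

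The only real obstacle in this plan is the diagonal: $t(e_i, e_i) + t(e_i, e_i) = b(e_i, e_i)$ would generally require $b(e_i, e_i) \in 2M$, which is exactly where non-polar examples (like $q(x) = x^2$ on $\mathbb{Z}/2 \to \mathbb{Z}/4$) break down. The freeness hypothesis is not used to circumvent this obstruction directly; rather, it is what allows us to prescribe $t$ on basis elements independently, while the identity $b(x,x) = 2q(x)$ (a formal consequence of the definition of a quadratic form, entirely independent of freeness) furnishes the required half. It is the interaction of the two that makes the argument go through.
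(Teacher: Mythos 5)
Your proof is correct and follows essentially the same route as the paper: the identical upper-triangular definition of $t$ on an ordered basis, with the diagonal handled by $b(e_i,e_i)=2q(e_i)$. The only cosmetic difference is that you derive $b(x,x)=2q(x)$ from $q(0)=0$ and $q(-x)=q(x)$, whereas the paper gets it via $q(2x)=4q(x)$; both are formal consequences of the definition.
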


The following argument is vaguely analogous to \cite[Lemma D.1]{MR2609644}.

\begin{proof}
Pick a basis $(e_{i})_{i\in I}$ (for some index set $I$) of $G$ as a free
$\mathbb{Z}$-module. Write $b(x,y)$ for the polarization of $q$, as in
Equation \ref{lefmu2b}. Pick an arbitrary total order on the set $I$. Define%
\[
t(e_{i},e_{j}):=\left\{
\begin{array}
[c]{ll}%
b(e_{i},e_{j}) & \text{if }i<j\\
q(e_{i}) & \text{if }i=j\\
0 & \text{if }i>j
\end{array}
\right.
\]
Then if $i\neq j$ we have $t(e_{i},e_{j})+t(e_{j},e_{i})=b(e_{i},e_{j})$ and
for $i=j$ we get $t(e_{i},e_{i})+t(e_{i},e_{i})=2q(e_{i})$, but we also have
$b(e_{i},e_{i})=q(2e_{i})-2q(e_{i})=2q(e_{i})$ since $q(2x)=4q(x)$ for all
$x\in G$ by the same computation as in Equation \ref{lmefa}. Thus, by
$\mathbb{Z}$-linear extension and since $(e_{i})_{i\in I}$ is a basis, we
conclude that $t(x,y)+t(y,x)=b(x,y)$ holds for all $x,y\in G$.
\end{proof}

For an abelian group $M$ we write $\left.  _{n}M\right.  =\ker(M\overset{\cdot
n}{\longrightarrow}M)$.

\begin{lemma}
\label{lemma_CharPolar}Let $t:G\times G\rightarrow M$ be a bilinear map. A
quadratic form $q:G\rightarrow M$

\begin{enumerate}
\item is polar for this $t$, if and only if

\item there exists some $\overline{q}\in\operatorname*{Hom}_{\mathbb{F}_{2}%
}(G/2G,\left.  _{2}M\right.  )$ such that $q(x)=t(x,x)+\overline{q}(x)$ for
all $x\in G$.
\end{enumerate}
\end{lemma}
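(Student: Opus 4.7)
The plan is to prove the two implications separately, with the harder direction being (1)$\Rightarrow$(2), although even that reduces to three short bookkeeping steps. The candidate for $\overline{q}$ is forced by the asserted formula: set $\overline{q}(x) := q(x) - t(x,x)$, and check it has the claimed properties.

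For the easy direction (2)$\Rightarrow$(1), I would simply unfold the polarization of $q(x) = t(x,x) + \overline{q}(x)$. The bilinearity of $t$ yields $t(x+y,x+y) - t(x,x) - t(y,y) = t(x,y)+t(y,x)$, while $\overline{q}(x+y) - \overline{q}(x) - \overline{q}(y) = 0$ because $\overline{q}$ is additive. So $b(x,y) = t(x,y)+t(y,x)$, as required.

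For (1)$\Rightarrow$(2), I plan to verify in order: (a) $\overline{q}$ is additive, i.e. a $\mathbb{Z}$-linear map $G \to M$; (b) $\overline{q}$ takes values in ${}_2M$; (c) $\overline{q}$ factors through $G/2G$. For (a), the polarization of $\overline{q}$ is $b(x,y) - \bigl(t(x+y,x+y) - t(x,x) - t(y,y)\bigr) = b(x,y) - t(x,y) - t(y,x) = 0$ by hypothesis, so $\overline{q}$ is additive. For (b), use that $q(-x) = q(x)$ (part of the definition of a quadratic form) and $t(-x,-x) = t(x,x)$ (by bilinearity of $t$) to conclude $\overline{q}(-x) = \overline{q}(x)$; combined with the additivity from (a), this gives $\overline{q}(x) = -\overline{q}(x)$, hence $2\overline{q}(x) = 0$. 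For (c), (b) tells us $2G \subseteq \ker \overline{q}$, so $\overline{q}$ descends along $G \twoheadrightarrow G/2G$, and since it lands in ${}_2M$ — an $\mathbb{F}_2$-vector space — the descended map is automatically $\mathbb{F}_2$-linear.

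There is no real obstacle: the only subtlety is conceptual, namely noticing that one must simultaneously record both that $\overline{q}$ factors through $G/2G$ and that its codomain can be restricted to ${}_2M$, which together produce the claimed element of $\operatorname{Hom}_{\mathbb{F}_2}(G/2G, {}_2M)$. Everything else is a two-line expansion.
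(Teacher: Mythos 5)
Your proposal is correct and follows essentially the same route as the paper: the same candidate $\overline{q}(x):=q(x)-t(x,x)$, shown to be even with vanishing polarization, hence an element of $\operatorname{Hom}_{\mathbb{F}_2}(G/2G,{}_2M)$, with the converse being a direct expansion of the polarization. The only difference is cosmetic: where the paper cites the exactness of the middle row of Lemma \ref{lemma_WhiteheadExactness} (and Examples \ref{example_Polar1}, \ref{example_Polar2} for the converse), you inline those verifications, which is equally valid.
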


\begin{proof}
Suppose $q$ is polar for the given $t$. As usual, let
$b(x,y):=q(x+y)-q(x)-q(y)$ be its polarization form, and by assumption we have
$b(x,y)=t(x,y)+t(y,x)$. Define $\overline{q}(x):=q(x)-t(x,x)$ for $x\in G$. We
claim that $\overline{q}$ is a quadratic form. Indeed,%
\[
\overline{q}(-x)=q(-x)-t(-x,-x)=q(x)-t(x,x)=\overline{q}(x)
\]
since $q$ is quadratic and $t$ bilinear. The polarization of $\overline{q}$ is%
\begin{align*}
\overline{b}(x,y)  &  =\overline{q}(x+y)-\overline{q}(x)-\overline{q}(y)\\
&  =q(x+y)-t(x+y,x+y)-q(x)+t(x,x)-q(y)+t(y,y)\\
&  =b(x,y)-(t(x,y)+t(y,x))=0\text{.}%
\end{align*}
We see that this polarization is (trivially) $\mathbb{Z}$-bilinear, completing
the verification that $\overline{q}$ is quadratic, but also proving that
$\overline{q}\in\operatorname*{Hom}_{\mathbb{F}_{2}}(G/2G,\left.
_{2}M\right.  )$ by the exactness of middle row in Lemma
\ref{lemma_WhiteheadExactness}. Conversely, suppose we know that $\overline
{q}$ exists. Then $q$ being polar for $t$ amounts to combining Example
\ref{example_Polar1} and Example \ref{example_Polar2}.
\end{proof}

\subsection{Proof of the main results}

\begin{theorem}
\label{thm_Main}Let $G,M$ be abelian groups. Let $(\beta_{i})_{i\in I}$ be a
basis of the $\mathbb{F}_{2}$-vector space $G/2G$. Let $q\in
\operatorname*{Quad}(G,M)$ be a polar quadratic form. Then it can be written
as%
\[
q(x)=t(x,x)+\overline{q}(x)
\]
(by Lemma \ref{lemma_CharPolar}) for $t:G\times G\rightarrow M$ bilinear and
$\overline{q}\in\operatorname*{Hom}_{\mathbb{F}_{2}}(G/2G,\left.
_{2}M\right.  )$. Define an abelian $3$-cocycle%
\begin{align*}
h(x,y,z)  &  :=0\\
c(x,y)  &  :=t(x,y)+\sum_{i\in I}\overline{x}_{i}\cdot\overline{y}_{i}%
\cdot\overline{q}(\beta_{i})\text{,}%
\end{align*}
where $\overline{x},\overline{y}$ are the vectors we get under the quotient
map $G\twoheadrightarrow G/2G$, spelled out with respect to the basis
$(\beta_{i})$. This means that $\overline{x}_{i},\overline{y}_{i}\in
\mathbb{F}_{2}$. Then the trace of $(h,c)$ is just the quadratic form $q$.
That is, we have found a preimage under%
\begin{equation}
H_{ab}^{3}(G,M)\overset{\sim}{\longrightarrow}\operatorname*{Quad}%
(G,M)\text{.} \label{lefmu4}%
\end{equation}
This preimage is independent of the choice of the basis $(\beta_{i})$.
\end{theorem}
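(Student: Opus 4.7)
The plan is to verify, in order, three claims: (i) the pair $(h,c)$ is a well-defined abelian $3$-cocycle, (ii) its trace $x\mapsto c(x,x)$ equals $q$, and (iii) the resulting cohomology class in $H^3_{ab}(G,M)$ does not depend on the chosen basis $(\beta_i)$ (or on the chosen decomposition $q(x)=t(x,x)+\overline{q}(x)$).

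For (i): since $h\equiv 0$, the group $3$-cocycle condition \ref{lefmu1a} is automatic, and the two abelian cocycle identities \ref{lx_a_1} and \ref{lx_a_2} collapse to the single requirement that $c$ be $\mathbb{Z}$-bilinear in each argument. The first summand $t(x,y)$ is bilinear by hypothesis. For the second summand $S(x,y):=\sum_{i\in I}\overline{x}_i\overline{y}_i\overline{q}(\beta_i)$, the key point (which is exactly where the $\mathbb{Z}/2, \mathbb{Z}/4$ example of the paper fails the polarity condition) is that Lemma \ref{lemma_CharPolar} guarantees $\overline{q}(\beta_i)\in{}_{2}M$. Therefore multiplication by $\overline{q}(\beta_i)$ is a well-defined additive map $\mathbb{F}_2\to M$. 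Since the coordinate maps $G\twoheadrightarrow G/2G\to\mathbb{F}_2$ are $\mathbb{Z}$-linear, I will conclude that $S(x,y)$ is $\mathbb{Z}$-linear in each variable separately. Summing, $c$ is bilinear, and $(h,c)$ is an abelian $3$-cocycle.

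For (ii): I will compute directly
\[
c(x,x)=t(x,x)+\sum_{i\in I}\overline{x}_i\,\overline{x}_i\,\overline{q}(\beta_i)=t(x,x)+\sum_{i\in I}\overline{x}_i\,\overline{q}(\beta_i),
\]
using $a^2=a$ in $\mathbb{F}_2$. Since $\overline{q}\in\operatorname{Hom}_{\mathbb{F}_2}(G/2G,{}_{2}M)$ is $\mathbb{F}_2$-linear and $(\beta_i)$ is a basis, the second sum equals $\overline{q}(x)$. Hence $c(x,x)=t(x,x)+\overline{q}(x)=q(x)$, as required. This identifies the image of $[(h,c)]$ under $\operatorname{tr}$ with $q$.

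For (iii): this is immediate from Theorem \ref{thm_EMDeepIso}, since $\operatorname{tr}:H^3_{ab}(G,M)\to\operatorname{Quad}(G,M)$ is an \emph{isomorphism}. Any two abelian $3$-cocycles sharing the same trace define the same cohomology class, so the construction, which always produces a cocycle with trace equal to $q$ regardless of the chosen basis or of the chosen bilinear $t$, necessarily yields a basis-independent class. The main conceptual obstacle throughout the argument is the danger illustrated by the non-polar $\mathbb{Z}/2\to\mathbb{Z}/4$ example preceding Lemma \ref{lemma_QuadMapChar}: a formula of the shape ``$xy$'' need not be bilinear when coordinates are reduced mod $2$ but the value group is not $2$-torsion. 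Polarity is exactly the condition which ensures that $\overline{q}(\beta_i)\in{}_{2}M$, and this is what makes the present cocycle formula legitimate and bilinear.
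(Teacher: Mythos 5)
Your proposal is correct and follows essentially the same route as the paper: setting $h=0$ reduces the abelian $3$-cocycle conditions to the $\mathbb{Z}$-bilinearity of $c$, which holds precisely because each $\overline{q}(\beta_i)$ is $2$-torsion, and basis-independence of the class is deduced from the bijectivity of the trace map of Theorem \ref{thm_EMDeepIso}. The only (harmless) variation is in verifying that the trace equals $q$: you evaluate $\sum_{i}\overline{x}_i\,\overline{q}(\beta_i)=\overline{q}(x)$ directly from the $\mathbb{F}_2$-linearity of $\overline{q}$ supplied by Lemma \ref{lemma_CharPolar}, whereas the paper compares the polarization of $Q(x):=c(x,x)$ with that of $q$ and uses the exact middle row of Lemma \ref{lemma_WhiteheadExactness} to reduce the identity $Q=q$ to a check on the basis vectors $\beta_j$; both arguments are valid, and yours is slightly more direct.
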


\begin{proof}
\textit{(Step 1) }Note that we can write the polarization of $q$ as%
\begin{equation}
b(x,y)=q(x+y)-q(x)-q(y)=t(x,y)+t(y,x) \label{lefmu6a}%
\end{equation}
for the bilinear form $t:G\times G\rightarrow M$ in the statement of the
theorem by Lemma \ref{lemma_CharPolar}. Next, we check that $(h,c)$ is an
abelian $3$-cocycle. Indeed, having $h(x,y,z)=0$, Identity \ref{lx_a_1} and
\ref{lx_a_2} simplify to the condition that $c:G\times G\rightarrow M$ is
supposed to be $\mathbb{Z}$-bilinear, so this is all we have to show. Since
$t$ is $\mathbb{Z}$-bilinear, it suffices to prove that%
\[
c^{\prime}(x,y):=\sum_{i\in I}\overline{x}_{i}\cdot\overline{y}_{i}%
\cdot\overline{q}(\beta_{i})
\]
is $\mathbb{Z}$-bilinear. However, this is clear since each $\overline
{q}(\beta_{i})\in\left.  _{2}M\right.  $ is a $2$-torsion element; it factors
over $G/2G\times G/2G\rightarrow\left.  _{2}M\right.  $, where it is a
bilinear form on $\mathbb{F}_{2}$-vector spaces.\newline\textit{(Step 2)} Now
that we know that $(h,c)$ is an abelian $3$-cocycle, we need to check that its
trace is $q$. Let us denote its trace by $Q$, i.e. $Q(x):=c(x,x)$. We obtain
the explicit formula%
\[
Q(x)=t(x,x)+\sum_{i\in I}\overline{x}_{i}^{2}\cdot\overline{q}(\beta
_{i})=t(x,x)+\sum_{i\in I}\overline{x}_{i}\cdot\overline{q}(\beta_{i})\text{,}%
\]
where we exploit that the values of $\overline{q}$ lie in the $\mathbb{F}_{2}%
$-vector space $\left.  _{2}M\right.  $, and $n^{2}\equiv n$ in $\mathbb{F}%
_{2}$ for all integers. The polarization of $Q$ is%
\[
B(x,y)=Q(x+y)-Q(x)-Q(y)=t(x,y)+t(y,x)=b(x,y)\text{.}%
\]
However, this is also the polarization of $q$, see Equation \ref{lefmu6a}.
Thus, by the exactness of middle row in Lemma \ref{lemma_WhiteheadExactness}
we deduce that the quadratic form $Q-q$ comes from $\operatorname*{Hom}%
_{\mathbb{F}_{2}}(G/2G,\left.  _{2}M\right.  )$. In particular, $Q-q$ is a
linear map on the $\mathbb{F}_{2}$-vector space $G/2G$. In order to show that
it is zero, it suffices to verify that it is zero on the basis vectors
$\beta_{j}$ with $j\in I$. We compute%
\[
(Q-q)(x)=t(x,x)-q(x)+\sum_{i\in I}\overline{x}_{i}\cdot\left(  \left.
q(\beta_{i})-t(\beta_{i},\beta_{i})\right.  \right)  \text{.}%
\]
So for $x:=\beta_{j}$ we have $\overline{x}_{i}=\delta_{i=j}$ (Kronecker
delta) and thus%
\[
(Q-q)(\beta_{j})=t(\beta_{j},\beta_{j})-q(\beta_{j})+q(\beta_{j})-t(\beta
_{j},\beta_{j})=0\text{.}%
\]
As this also vanishes, Lemma \ref{lemma_WhiteheadExactness} implies that $Q-q$
is the zero quadratic form, i.e. $Q=q$. Finally, our class $(h,c)$ is
independent of the choice of the basis $(\beta_{i})_{i\in I}$ since by the
Eilenberg--Mac\thinspace Lane theorem the trace map in Equation \ref{lefmu4}
is bijective.
\end{proof}

\begin{theorem}
\label{thm_StrictlyAssocSkeletalization}A braided categorical group
$(\mathsf{C},\otimes)$ is braided monoidal equivalent to one which is
simultaneously strictly associative and skeletal if and only if its quadratic
form is polar.
\end{theorem}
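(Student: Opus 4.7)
The plan is to reduce everything to the skeletal model $\mathcal{T}(G, M, (h,c))$ provided by the Joyal--Street classification (Theorem \ref{thm_JSClassif}), and then invoke Remark \ref{rmk_StrictlyAssociativeAndSkeletal}, which says that a skeletal braided categorical group is braided monoidal equivalent to a simultaneously skeletal and strictly associative one if and only if the abelian $3$-cohomology class $[(h,c)] \in H_{ab}^{3}(\pi_{0}\mathsf{C}, \pi_{1}\mathsf{C})$ admits a representative with $h \equiv 0$. In other words, the theorem reduces to the purely cohomological statement: the trace class $\operatorname{tr}[(h,c)] = q$ is polar if and only if $[(h,c)]$ has a representative of the form $(0, c')$.

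For the \emph{if} direction, I would argue as follows. Suppose the quadratic form $q \in \operatorname{Quad}(\pi_{0}\mathsf{C}, \pi_{1}\mathsf{C})$ of $(\mathsf{C},\otimes)$ is polar. Then Theorem \ref{thm_Main} already produces an explicit abelian $3$-cocycle $(h', c')$ with $h' \equiv 0$ whose trace is exactly $q$. By the Eilenberg--Mac\thinspace Lane isomorphism (Theorem \ref{thm_EMDeepIso}), this cocycle represents the unique class in $H_{ab}^{3}$ mapping to $q$. Hence, combining with essential surjectivity and conservativity of the functor $T$ (Theorem \ref{thm_JSClassif}), $(\mathsf{C},\otimes)$ is braided monoidal equivalent to $\mathcal{T}(\pi_{0}\mathsf{C}, \pi_{1}\mathsf{C}, (0, c'))$, which is skeletal by construction and strictly associative because its associator is the identity.

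For the \emph{only if} direction, suppose $(\mathsf{C}, \otimes)$ is braided monoidal equivalent to some braided categorical group $(\mathsf{C}', \otimes)$ that is simultaneously strictly associative and skeletal. Since $\mathsf{C}'$ is skeletal, the proof of Remark \ref{rmk_StrictlyAssociativeAndSkeletal} shows that $\mathsf{C}' = \mathcal{T}(G, M, (h,c))$ for $G = \pi_{0}\mathsf{C}$, $M = \pi_{1}\mathsf{C}$, and some abelian $3$-cocycle $(h,c)$. Strict associativity forces $h \equiv 0$. Feeding $h \equiv 0$ into the cocycle identities \ref{lx_a_1} and \ref{lx_a_2} of Definition \ref{def_Ab3Cocycle} immediately yields that $c : G \times G \to M$ is $\mathbb{Z}$-bilinear in each variable. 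Setting $t := c$, we obtain $q(x) = c(x,x) = t(x,x)$, which is polar by Example \ref{example_Polar1} (equivalently, its polarization equals $t(x,y) + t(y,x)$).

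The only genuine obstacle is the \emph{if} direction, since it requires actually exhibiting a cocycle representative with vanishing associator starting from nothing but the polarity hypothesis on $q$. That work has, however, already been carried out in Theorem \ref{thm_Main}; the present theorem is essentially its categorical translation via the Joyal--Street classification. Thus, once one is allowed to cite Theorem \ref{thm_Main} and Remark \ref{rmk_StrictlyAssociativeAndSkeletal}, the argument collapses to the short bookkeeping above.
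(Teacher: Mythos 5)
Your proposal is correct and follows essentially the same route as the paper's own proof: both directions reduce to the skeletal model $\mathcal{T}(G,M,(h,c))$ via Theorem \ref{thm_JSClassif} and Remark \ref{rmk_StrictlyAssociativeAndSkeletal}, with $h\equiv 0$ forcing $c$ to be bilinear so that $t:=c$ exhibits polarity, and the converse supplied by the explicit representative $(0,c')$ from Theorem \ref{thm_Main}. The only cosmetic difference is that you cite Example \ref{example_Polar1} where the paper computes the polarization $b(x,y)=c(x,y)+c(y,x)$ directly, which is the same calculation.
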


\begin{proof}
We find a braided monoidal equivalence $(\mathsf{C},\otimes)\simeq
\mathcal{T}(G,M,(h,c))$ for $G:=\pi_{0}\mathsf{C}$, $M:=\pi_{1}\mathsf{C}$,
and $(h,c)\in H_{ab}^{3}(G,M)$ by the properties of the functor $T$ of Theorem
\ref{thm_JSClassif}. Being skeletal, the triviality of the associator means
that $h(x,y,z)=0$ for all $x,y,z$, Remark
\ref{rmk_StrictlyAssociativeAndSkeletal}. Using this, Identity \ref{lx_a_1}
and \ref{lx_a_2} simplify to say that $c:G\times G\rightarrow M$ is bilinear.
Let $q(x):=c(x,x)$ be its trace, which is quadratic by Lemma
\ref{lemma_TraceIsQuadratic}. For the polarization of $q$ we compute%
\[
b(x,y)=c(x+y,x+y)-c(x,x)-c(y,y)=c(x,y)+c(y,x)\text{.}%
\]
Thus, $q$ is polar, because we may use $t(x,y):=c(x,y)$ in Definition
\ref{def_PolarQuadraticMap}. Conversely, suppose $(h,c)$ under the
Eilenberg--Mac\thinspace Lane isomorphism of Equation \ref{lEMMa} gets sent to
a polar quadratic form. Then we may apply Theorem \ref{thm_Main} and it
produces a cohomologous abelian $3$-cocycle representative of the shape
$(0,c^{\prime})$, so reversely by Remark
\ref{rmk_StrictlyAssociativeAndSkeletal} we get a braided monoidal equivalence%
\[
(\mathsf{C},\otimes)\simeq\mathcal{T}(G,M,(0,c^{\prime}))\text{,}%
\]
from Theorem \ref{thm_JSClassif}, but the right side is both skeletal and
strictly associative.
\end{proof}

\begin{theorem}
\label{thm_PolarCover}For every braided categorical group $(\mathsf{C}%
,\otimes)$, there exists an essentially surjective and faithful (but typically
not full) braided monoidal functor from another braided categorical group%
\begin{equation}
(\widehat{\mathsf{C}},\otimes)\longrightarrow(\mathsf{C},\otimes)\text{,}
\label{l_1H}%
\end{equation}
surjective on $\pi_{0}$, and an isomorphism on $\pi_{1}$, such that
$(\widehat{\mathsf{C}},\otimes)$ is simultaneously strictly associative and
skeletal. We call $(\widehat{\mathsf{C}},\otimes)$ a \emph{polar cover} of
$(\mathsf{C},\otimes)$.
\end{theorem}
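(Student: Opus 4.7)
The plan is to replace $\pi_0\mathsf{C}$ by a free abelian group mapping onto it, and then invoke Lemma \ref{lemma_FormsOnFreeAbelianGroupArePolar}: every quadratic form on a free abelian group is polar. After pulling back the invariants along such a surjection, the hypothesis of Theorem \ref{thm_StrictlyAssocSkeletalization} is automatically satisfied on the cover, and the concrete strict skeletal model is produced directly by Theorem \ref{thm_Main}.

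First I would use Theorem \ref{thm_JSClassif} to replace $(\mathsf{C},\otimes)$ up to braided monoidal equivalence by its skeletal model $\mathcal{T}(G,M,(h,c))$, where $G:=\pi_0\mathsf{C}$, $M:=\pi_1\mathsf{C}$, and $q\in\operatorname{Quad}(G,M)$ is the Eilenberg--Mac\thinspace Lane trace of $(h,c)$. Next, choose any surjection $\pi:\hat{G}\twoheadrightarrow G$ from a free abelian group $\hat{G}$ (e.g., the free abelian group on the underlying set of $G$). Pulling back yields a quadratic form $\hat{q}:=q\circ\pi$ on $\hat{G}$, which is polar by Lemma \ref{lemma_FormsOnFreeAbelianGroupArePolar}.

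Then apply Theorem \ref{thm_Main} (with some choice of basis of $\hat{G}/2\hat{G}$) to produce an explicit abelian $3$-cocycle $(0,\hat{c})$ on $\hat{G}$ whose trace is $\hat{q}$, and set
\[
(\widehat{\mathsf{C}},\otimes):=\mathcal{T}(\hat{G},M,(0,\hat{c})).
\]
This is skeletal by construction of $\mathcal{T}$ and strictly associative since the associator cocycle vanishes; the unit constraints are identities straight from the definition of $\mathcal{T}$. To build the functor $\widehat{\mathsf{C}}\to\mathsf{C}$, observe that the pair $(\pi,\operatorname{id}_M):(\hat{G},M,\hat{q})\to(G,M,q)$ is a morphism in $\mathcal{Q}uad$; by the fullness assertion of Theorem \ref{thm_JSClassif}(4) there is a braided monoidal functor $F:\widehat{\mathsf{C}}\to\mathsf{C}$ realising it, inducing $\pi$ on $\pi_0$ and $\operatorname{id}$ on $\pi_1$.

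It remains to verify the listed properties. Essential surjectivity, surjectivity on $\pi_0$, and the isomorphism on $\pi_1$ are immediate from the choice of $\pi$ and the construction of $F$. Faithfulness holds because in $\mathcal{T}(-,M,-)$ every Hom-set is either empty or a principal homogeneous $M$-set, and $F$ acts there as $\operatorname{id}_M$. Failure of fullness is generic: as soon as $\pi$ is non-injective, any pair $\hat{x}_1\ne\hat{x}_2$ with $\pi(\hat{x}_1)=\pi(\hat{x}_2)$ has $\operatorname{Hom}_{\widehat{\mathsf{C}}}(\hat{x}_1,\hat{x}_2)=\emptyset$ by skeletality, whereas $\operatorname{Hom}_{\mathsf{C}}(F\hat{x}_1,F\hat{x}_2)\cong M$ is non-empty whenever $M\ne 0$. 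The main obstacle is the construction of $F$ itself: abstractly it is granted by fullness of $T$, but an explicit description would require exhibiting an abelian $3$-coboundary connecting $(0,\hat{c})$ to the pulled-back cocycle $(h\circ\pi^{\times 3},c\circ\pi^{\times 2})$. Such a coboundary exists because both cocycles have the same trace $\hat{q}$ and the trace map of Theorem \ref{thm_EMDeepIso} is injective, but pinning one down is where the genuine bookkeeping lies.
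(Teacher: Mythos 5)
Your proposal is correct and follows essentially the same route as the paper: choose a free abelian group surjecting onto $\pi_{0}\mathsf{C}$, pull back the quadratic form (polar by Lemma \ref{lemma_FormsOnFreeAbelianGroupArePolar}), realize it by a cocycle $(0,\hat{c})$ via Theorem \ref{thm_Main}, and obtain the comparison functor from fullness of $T$ in Theorem \ref{thm_JSClassif}, then check essential surjectivity, faithfulness, and failure of fullness exactly as the paper does. The only difference is cosmetic: the paper first invokes fullness for some cocycle representing $q\circ\pi$ and then precomposes with an equivalence to the strict skeletal model, whereas you apply Theorem \ref{thm_Main} first and invoke fullness once.
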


\begin{proof}
By the Joyal--Street classification, there exists an equivalence%
\[
F:(\mathsf{C},\otimes)\overset{\sim}{\rightarrow}\mathcal{T}(\pi_{0}%
\mathsf{C},\pi_{1}\mathsf{C},(h,c))
\]
for $(h,c)$ an abelian $3$-cocycle representing the relevant cohomology class.
Let $c:P\twoheadrightarrow\pi_{0}\mathsf{C}$ be a surjective homomorphism from
a free abelian group $P$ (e.g., pick a projective cover of $\pi_{0}\mathsf{C}$
as a $\mathbb{Z}$-module). We get a commutative square%
\[%
\xymatrix{
P \ar@{->>}[r]^{c} \ar[d]_{q \circ c} & {  \pi_{0}{\mathsf{C}} } \ar[d]^{q} \\
\pi_{1}{\mathsf{C}} \ar@{=}[r] & \pi_{1}{\mathsf{C}},
}%
\]
describing a morphism $(P,\pi_{1}\mathsf{C},q\circ c)\rightarrow(\pi
_{0}\mathsf{C},\pi_{1}\mathsf{C},q)$ in the category $\mathcal{Q}uad$ of
\S \ref{sect_JoyalStreetClassif}. Note that $q\circ c$ is indeed again a
quadratic form since $c$ is $\mathbb{Z}$-linear. As the functor $T$ of Theorem
\ref{thm_JSClassif} is full and essentially surjective, and the skeletal
categorical groups $\mathcal{T}(-,-,-)$ are concrete representatives for the
essential surjectivity, we obtain some braided monoidal functor%
\[
G:\mathcal{T}(P,\pi_{1}\mathsf{C},(h^{\prime},c^{\prime}))\longrightarrow
\mathcal{T}(\pi_{0}\mathsf{C},\pi_{1}\mathsf{C},(h,c))\text{,}%
\]
where $(h^{\prime},c^{\prime})$ is an abelian $3$-cocycle corresponding to the
lifted quadratic form $q\circ c$ under the\ Eilenberg--Mac\thinspace Lane
isomorphism of Equation \ref{lEMMa}. Next, by Lemma
\ref{lemma_FormsOnFreeAbelianGroupArePolar} since $P$ is free abelian, the
quadratic form $q\circ c$ is necessarily polar and thus has a representative
of the shape $(0,c^{\prime\prime})$ within its cohomology class in $H_{ab}%
^{3}(P,\pi_{1}\mathsf{C})$ by Theorem \ref{thm_Main}. Hence, we can
pre-compose $G$ with a braided monoidal equivalence%
\[
\mathcal{T}(P,\pi_{1}\mathsf{C},(0,c^{\prime\prime}))\overset{\sim
}{\longrightarrow}\mathcal{T}(P,\pi_{1}\mathsf{C},(h^{\prime},c^{\prime
}))\longrightarrow\mathcal{T}(\pi_{0}\mathsf{C},\pi_{1}\mathsf{C},(h,c))
\]
and as explained in\ Remark \ref{rmk_StrictlyAssociativeAndSkeletal} the
braided categorical group $\mathcal{T}(P,\pi_{1}\mathsf{C},(0,c^{\prime\prime
}))$ is both skeletal and strictly associative. Call it $(\widehat{\mathsf{C}%
},\otimes)$. Using the above composition of functors, we get the braided
monoidal functor of\ Equation \ref{l_1H}; the surjection on the level of
$\pi_{0}$ is the map $c$, and the isomorphy on the level of $\pi_{1}$ is
clear. As the map on the level of $\pi_{0}$ is surjective, the functor is
essentially surjective. On the level of morphisms only two things can happen:
Morphisms are $\pi_{1}\mathsf{C}$ if the objects are isomorphic, or vacuous
otherwise. It follows that the functor is faithful, but not necessarily full
since non-isomorphic objects may become isomorphic, namely when two distinct
elements of $P$ map to the same element in $\pi_{0}\mathsf{C}$.
\end{proof}

%

\appendix

\section{Strictifying the universal determinant}

Finally, we explain a consequence of our results to Deligne's universal
determinant functor of an exact category. Allowing ourselves an anachronistic
interpretation, the idea of the universal determinant functor is as follows:

Take the truncation of the $K$-theory spectrum $K(\mathsf{C})$ of an exact
category $\mathsf{C}$ to its stable homotopy $[0,1]$-type. We get a map
$K(\mathsf{C})\rightarrow\tau_{\leq1}K(\mathsf{C})$ and using a stable variant
of Equation \ref{lzx1} (e.g., concretely \cite{MR2981952} or \cite[\S 5]%
{MR2981817}), the stable homotopy $[0,1]$-type $\tau_{\leq1}K(\mathsf{C})$ can
be modelled as a Picard groupoid. Deligne has observed in his paper
\cite[\S 4.2]{MR902592} that this map and the relevant Picard groupoid can
equivalently be described as the target of the universal determinant functor
defined on the category $\mathsf{C}$, giving a formulation which is a priori
independent of any algebraic $K$-theory. This yields a connection to the far
less homotopically defined determinant functors in terms of top exterior
powers of vector bundles, as they would occur in algebraic geometry (e.g., the
determinant line bundle on moduli spaces of vector bundles on curves, as just
one possible application).

Now, being a Picard groupoid, it already follows from the result of
Johnson--Osorno \cite{MR2981952} that $\tau_{\leq1}K(\mathsf{C})$ can be made
skeletal and strictly associative. In this appendix we explain how to pin down
the relevant symmetry constraint, using the formula from our Theorem
\ref{thm_Main}. It will also follow readily that besides the target,
\textit{the entire universal determinant functor can be strictified}.

We have not found this fact recorded anywhere in the literature.\medskip

In detail: Suppose $\mathsf{C}$ is an exact category with a fixed zero object.
Deligne constructs the Picard groupoid of virtual objects $V(\mathsf{C})$: Let
$Q\mathsf{C}$ be the Quillen $Q$-construction of $\mathsf{C}$; see for example
\cite[Chapter IV, \S 6]{MR3076731} for a precise definition. Write
$N_{\bullet}Q\mathsf{C}$ for its nerve. As $Q\mathsf{C}$ has the same objects
as $\mathsf{C}$ itself, the fixed zero object pins down a $0$-simplex of
$N_{\bullet}Q\mathsf{C}$, rendering the latter a pointed simplicial set.

Set up a new category $V(\mathsf{C})$

\begin{enumerate}
\item whose objects are closed loops in the space $N_{\bullet}Q\mathsf{C}$
around the base point, and

\item whose morphisms are based homotopy classes of homotopies between loops.
\end{enumerate}

The composition of morphisms is defined as the composition of homotopies. The
associativity law for composition then holds (and only holds) because
morphisms are only taken modulo their based homotopy class.

One checks that the above makes $V(\mathsf{C})$ a groupoid. A monoidal
structure on $V(\mathsf{C})$, i.e. a suitable bifunctor $\otimes
:V(\mathsf{C})\times V(\mathsf{C})\rightarrow V(\mathsf{C})$ is defined as the
composition of loops on the level of objects. This bifunctor can be promoted
to a symmetric monoidal structure. We refer to \cite[\S 4.2]{MR902592} for
further details.

In \cite[\S 4.3]{MR902592} Deligne gives a second construction of
$V(\mathsf{C})$. He first sets up the concept of a determinant functor. Given
any exact category $\mathsf{C}$, we write $\mathsf{C}^{\times}$ for the same
category, except that we only keep isomorphisms as morphisms (this is
sometimes called the \emph{maximal inner groupoid} or \emph{group core}). This
is a groupoid.

\begin{definition}
[{\cite[\S 4.3]{MR902592}}]\label{def_DeterminantFunctor}Let $\mathsf{C}$ be
an exact category and let $(\mathsf{P},\otimes)$ be a Picard groupoid. A
\emph{determinant functor} on $\mathsf{C}$ is a functor $\mathcal{D}%
:\mathsf{C}^{\times}\rightarrow\mathsf{P}$ along with the following extra
structure and axioms:

\begin{enumerate}
\item For any exact sequence $\Sigma:G^{\prime}\hookrightarrow
G\twoheadrightarrow G^{\prime\prime}$ in $\mathsf{C}$, we are given an
isomorphism%
\[
\mathcal{D}(\Sigma):\mathcal{D}(G)\overset{\sim}{\longrightarrow}%
\mathcal{D}(G^{\prime})\underset{\mathsf{P}}{\otimes}\mathcal{D}%
(G^{\prime\prime})
\]
in $\mathsf{P}$. This isomorphism is required to be functorial in morphisms of
exact sequences.

\item For every zero object $Z$ of $\mathsf{C}$, we are given an isomorphism
$z:\mathcal{D}(Z)\overset{\sim}{\rightarrow}1_{\mathsf{P}}$ to the neutral
object of the Picard groupoid.

\item Suppose $f:G\rightarrow G^{\prime}$ is an isomorphism in $\mathsf{C}$.
We write%
\[
\Sigma_{l}:0\hookrightarrow G\twoheadrightarrow G^{\prime}\qquad
\text{and}\qquad\Sigma_{r}:G\hookrightarrow G^{\prime}\twoheadrightarrow0
\]
for the depicted exact sequences. We demand that the composition%
\begin{equation}
\mathcal{D}(G)\underset{\mathcal{D}(\Sigma_{l})}{\overset{\sim}%
{\longrightarrow}}\mathcal{D}(0)\underset{\mathsf{P}}{\otimes}\mathcal{D}%
(G^{\prime})\underset{z\otimes1}{\overset{\sim}{\longrightarrow}}%
1_{\mathsf{P}}\underset{\mathsf{P}}{\otimes}\mathcal{D}(G^{\prime}%
)\underset{\mathsf{P}}{\overset{\sim}{\longrightarrow}}\mathcal{D}(G^{\prime})
\label{l_CDetFunc1}%
\end{equation}
and the natural map $\mathcal{D}(f):\mathcal{D}(G)\overset{\sim}{\rightarrow
}\mathcal{D}(G^{\prime})$ agree. We further require that $\mathcal{D}(f^{-1})$
agrees with a variant of Equation \ref{l_CDetFunc1} using $\Sigma_{r}$ instead
of $\Sigma_{l}$.

\item If a two-step filtration $G_{1}\hookrightarrow G_{2}\hookrightarrow
G_{3}$ is given, we demand that the diagram%
\begin{equation}%
\xymatrix{
\mathcal{D}(G_3) \ar[r]^-{\sim} \ar[d]_{\sim} & \mathcal{D}(G_1) \otimes
\mathcal{D}(G_3/G_1) \ar[d]^{\sim} \\
\mathcal{D}(G_2) \otimes\mathcal{D}(G_3/G_2) \ar[r]_-{\sim} & \mathcal
{D}(G_1) \otimes\mathcal{D}(G_2/G_1) \otimes\mathcal{D}(G_3/G_2)
}
\label{l2}%
\end{equation}
commutes.

\item Given objects $G,G^{\prime}\in\mathsf{C}$ consider the exact sequences%
\[
\Sigma_{1}:G\hookrightarrow G\oplus G^{\prime}\twoheadrightarrow G^{\prime
}\qquad\text{and}\qquad\Sigma_{2}:G^{\prime}\hookrightarrow G\oplus G^{\prime
}\twoheadrightarrow G
\]
with the natural inclusion and projection morphisms. Then the diagram%
\begin{equation}%
\xymatrix{
& \mathcal{D}(G \oplus G^{\prime}) \ar[dl]_{\mathcal{D}(\Sigma_1)}
\ar[dr]^{\mathcal{D}(\Sigma_2)} \\
\mathcal{D}(G) \otimes\mathcal{D}(G^{\prime}) \ar[rr]_{s_{G,G^{\prime}}}
& & \mathcal{D}(G^{\prime}) \otimes\mathcal{D}(G)
}
\label{lz1a}%
\end{equation}
commutes, where $s_{G,G^{\prime}}$ denotes the symmetry constraint of
$\mathsf{P}$.
\end{enumerate}
\end{definition}

At the end of \cite[\S 4.3]{MR902592} Deligne now considers the category
$\det(\mathsf{C},\mathsf{P})$ of determinant functors, i.e.

\begin{enumerate}
\item objects are determinant functors in the sense of the above definition, and

\item morphisms are natural transformations of determinant functors.
\end{enumerate}

Details for this are spelled out in \cite[\S 2.3]{MR2842932}, especially a
full description of a morphism of determinant functors is \cite[Definition
2.5]{MR2842932}. We also took over his notation $\det(\mathsf{C},\mathsf{P})$
for this category.

\begin{definition}
\label{def_UnivDetFunctor}A determinant functor $\mathcal{D}:\mathsf{C}%
^{\times}\longrightarrow\mathsf{P}$ is called \emph{universal} if for every
given Picard groupoid $\mathsf{P}^{\prime}$ the functor%
\[
\operatorname*{Hom}\nolimits^{\otimes}(\mathsf{P},\mathsf{P}^{\prime
})\longrightarrow\det\left(  \mathsf{C},\mathsf{P}^{\prime}\right)
\,\text{,}\qquad\varphi\mapsto\varphi\circ\mathcal{D}%
\]
is an equivalence of categories.
\end{definition}

This is already in Deligne \cite[\S 4.3]{MR902592}, but perhaps a little more
detailed in \cite[\S 4.1]{MR2842932}. Deligne then argues that a universal
determinant functor exists and can be constructed using $V(\mathsf{C})$. To
set it up, recall that the $Q$-construction category $Q\mathsf{C}$ has the
same objects as $\mathsf{C}$, and for every admissible monic (resp. epic) $f$
in $\mathsf{C}$, there are arrows $f_{!}$ (resp. $f^{!}$) in $Q\mathsf{C}$;
see for example \cite[\S 2]{MR0338129} or \cite[Chapter 6]{MR1382659}. Let $0$
be the fixed zero object of $\mathsf{C}$. We use the notation%
\[
0_{!}^{A}=(0\twoheadleftarrow0\hookrightarrow A)\qquad\text{and}\qquad
0_{A}^{!}=(0\twoheadleftarrow A\hookrightarrow A)\text{,}%
\]
using the canonical arrows coming from the fact that $0$ is both initial and
final in $\mathsf{C}$. For every object $X\in\mathsf{C}$ one considers the
diagram%
\begin{equation}%
\xymatrix{
0 \ar[rr]^{0_{!}^{X}}  & & X & & 0 \ar[ll]_{0_{X}^{!}}
}
\label{lal1}%
\end{equation}
showing that $(0_{!}^{A})^{-1}\circ0_{A}^{!}$ is a closed loop around the base
point in the nerve of $Q\mathsf{C}$. We denote it by $[X]$. We can now
formulate Deligne's fundamental result. Recall that defining a morphism in
$V(\mathsf{C})$ can be done by pinning down a homotopy of loops.

\begin{theorem}
[{Deligne, \cite[\S 4.4-4.5]{MR902592}}]Let $\mathsf{C}$ be an exact category
with a fixed zero object $0$. Then $V(\mathsf{C})$ is a Picard groupoid.
Define a functor $\mathcal{D}:\mathsf{C}^{\times}\rightarrow V(\mathsf{C})$ by%
\[
X\mapsto\lbrack X]
\]
on objects, i.e. the loop of Equation \ref{lal1}. Isomorphisms $\varphi
:X\rightarrow Y$ give rise to a homotopy of loops%
\[%
\xymatrix@!=0.015in{
& & X \ar[dd]^{\varphi} \\
0 \ar[urr]^{0_{!}^{X}} \ar[drr]_{0_{!}^{Y}}  & & & & 0 \ar[ull]_{0_{X}^{!}}
\ar[dll]^{0_{Y}^{!}} \\
& & Y
}%
\]
and this defines the functor on morphisms. To any exact sequence%
\[
\left.  \Sigma:\right.  \left.  A\overset{\alpha}{\hookrightarrow}%
B\overset{\beta}{\twoheadrightarrow}C\right.
\]
attach the homotopy $\mathcal{D}(\Sigma)$
\begin{equation}%
\xymatrix@!=0.015in{
0 \ar[rr]^{0_{!}^{B}} \ar[dr]_{0_{!}^{A}} & & B & & 0 \ar[dl]^{0_{C}^{!}}
\ar[ll]_{0_{B}^{!}}   \\
& A \ar[ur]_{\alpha_{!}} & & C \ar[ul]^{\beta^{!}} \\
& & 0, \ar@{-->}[uu] \ar[ul]^{0^{!}_{A}} \ar[ur]_{0_{!}^{C}}
}
\label{l1}%
\end{equation}
which are four $2$-simplices giving a homotopy between the required loops. The
dashed arrow is $(0\twoheadleftarrow A\overset{\alpha}{\hookrightarrow}B)$ in
$Q\mathsf{C}$. Then $\mathcal{D}$ is a universal determinant functor.
\end{theorem}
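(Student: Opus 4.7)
The plan is to prove the three assertions of the theorem — that $V(\mathsf{C})$ is a Picard groupoid, that $\mathcal{D}$ satisfies the axioms of Definition \ref{def_DeterminantFunctor}, and that it is universal — by translating each into a homotopy-theoretic statement about the loop space $\Omega |N_\bullet Q\mathsf{C}|$. For the first claim, I would invoke Quillen's theorem identifying this loop space with the algebraic $K$-theory space $K(\mathsf{C})$, which is the underlying space of a connective spectrum and therefore an infinite loop space. By construction, $V(\mathsf{C})$ is the fundamental groupoid of this loop space, so it inherits a symmetric monoidal structure from the $E_\infty$-structure on $K(\mathsf{C})$: the monoidal product $\otimes$ on $V(\mathsf{C})$ is concatenation of loops (the first delooping), while the commutativity constraint is produced by the second delooping. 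Existence of inverses $[X]^{-1}$ is automatic, because $\pi_0$ of a loop space is a group.

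For the second claim, the five axioms of Definition \ref{def_DeterminantFunctor} each unravel into the construction of a specific homotopy of loops in $N_\bullet Q\mathsf{C}$, and the $Q$-construction is custom-built to produce exactly the needed simplices. Axiom (1) follows from naturality of the explicit $2$-simplex diagram defining $\mathcal{D}(\Sigma)$ in morphisms of exact sequences. Axioms (2) and (3) are direct unwindings of the definition of $\mathcal{D}$ on isomorphisms, using that degenerate simplices at $0$ give the unit constraint. The geometric content lies in axioms (4) and (5). For (4), a two-step filtration $G_1\hookrightarrow G_2\hookrightarrow G_3$ produces in $N_\bullet Q\mathsf{C}$ a $3$-simplex whose face structure realizes precisely the pentagon of Equation \ref{l2}. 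For (5), the biCartesian square associated with $G\oplus G'$ provides a pair of $2$-simplices whose comparison encodes the symmetry constraint $s_{G,G'}$.

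The hardest part is universality, and this is where the real obstacle lies. The cleanest approach is to identify $V(\mathsf{C})$ with the stable Postnikov truncation $\tau_{\leq 1}K(\mathsf{C})$ via the stable version of the equivalence in Equation \ref{lzx1}. Under this identification, the functor $\mathcal{D}:\mathsf{C}^{\times}\to V(\mathsf{C})$ becomes the canonical map from the point-set-level category to the stable $[0,1]$-type of its $K$-theory spectrum. Given any Picard groupoid $\mathsf{P}'$, modelled by its associated stable $[0,1]$-type, one needs to establish a natural bijection between determinant functors $\mathsf{C}^{\times}\to\mathsf{P}'$ and spectrum maps $K(\mathsf{C})\to\mathsf{P}'$; universality then becomes the universal property of Postnikov truncation. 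The main obstacle is establishing this bijection: one must show that the axioms (1)--(5) of Definition \ref{def_DeterminantFunctor} precisely encode the cell-by-cell data defining a based simplicial map out of $N_\bullet Q\mathsf{C}$ modulo homotopy. Concretely, objects give $1$-cells, short exact sequences give $2$-cells, and axioms (4) and (5) express the relations imposed by the $3$-cells of $N_\bullet Q\mathsf{C}$ coming from three-term filtrations and direct-sum decompositions. Deligne carries this out by explicit $2$-skeleton analysis in \cite[\S 4.4--4.5]{MR902592}; a modern alternative would be to invoke the $\infty$-categorical universal property of $K$-theory as the initial additive invariant, which packages all of these checks into a single universal statement.
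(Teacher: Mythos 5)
First, note that the paper does not prove this statement at all: it is quoted as Deligne's theorem with a citation to \cite[\S 4.4--4.5]{MR902592}, and the appendix only uses it as input. So there is no internal proof to compare against, and your proposal has to stand on its own as an actual argument.

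As such it has a genuine gap, and it sits exactly where you locate the difficulty: universality. Your first two parts are reasonable (identifying $\Omega|N_{\bullet}Q\mathsf{C}|$ with the $K$-theory space and reading off the Picard-groupoid structure from the infinite-loop structure, and producing the data of Definition \ref{def_DeterminantFunctor} from explicit low-dimensional simplices of $N_{\bullet}Q\mathsf{C}$ -- though Equation \ref{l2} is a square, not a pentagon, and the verification that loop concatenation and the chosen $2$-cells descend to well-defined structure on homotopy classes of homotopies is waved through). But for the universal property you only assert that one ``needs to establish a natural bijection'' between determinant functors $\mathsf{C}^{\times}\to\mathsf{P}'$ and maps of spectra (equivalently, maps of stable $[0,1]$-types) out of $K(\mathsf{C})$, and then you discharge this by saying that Deligne carries it out by $2$-skeleton analysis -- i.e.\ you cite the very result you are supposed to prove. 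The suggested ``modern alternative'' does not close the gap either: the universal property of $K$-theory as the initial additive invariant concerns functors from stable $\infty$-categories to spectra, not determinant functors on an exact category valued in Picard groupoids, so it does not by itself yield the required equivalence $\operatorname*{Hom}^{\otimes}(V(\mathsf{C}),\mathsf{P}')\simeq\det(\mathsf{C},\mathsf{P}')$; the statement one actually needs is the corepresentability of determinant functors by the $1$-truncation of the $K$-theory spectrum, which is a theorem in its own right (proved by Deligne, and in modern form in work such as \cite{MR2842932}) and requires showing that axioms (1)--(5) generate \emph{all} relations among the $2$-cells of $N_{\bullet}Q\mathsf{C}$ up to the relevant homotopies -- precisely the combinatorial core that your outline leaves untouched.
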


Usually, although Definition \ref{def_UnivDetFunctor} is a little more subtle
in its formulation, this is (very reasonably) simply called \textit{the}
universal determinant functor. Next, we shall apply the strictification
methods of \S \ref{sect_Polar} to Deligne's constructions. First of all,
$V(\mathsf{C})$ is a Picard groupoid, i.e. a braided categorical group whose
braiding is symmetric. We apply Joyal--Street's skeletalization, giving a
braided (symmetric) monoidal equivalence%
\begin{equation}
V(\mathsf{C})\overset{\sim}{\longrightarrow}\mathcal{T}(\pi_{0}V(\mathsf{C}%
),\pi_{1}V(\mathsf{C}),(h,c))\text{,} \label{lam}%
\end{equation}
where $[(h,c)]$ is the abelian $3$-cohomology class in $H_{ab}^{3}(\pi
_{0}V(\mathsf{C}),\pi_{1}V(\mathsf{C}))$ encoding the braiding and
associativity constraint. Since we have canonical isomorphisms%
\[
\pi_{0}V(\mathsf{C})\cong K_{0}(\mathsf{C})\qquad\text{and}\qquad\pi
_{1}V(\mathsf{C})\cong K_{1}(\mathsf{C})\text{,}%
\]
using that the virtual objects arise as the truncation of the $K$-theory
spectrum to its stable $[0,1]$-type, we understand these groups. Moreover,
since the braiding is symmetric, this cocycle lies in the subgroup
$H_{sym}^{3}\subseteq H_{ab}^{3}$, as in Lemma \ref{lemma_WhiteheadExactness}.

It follows from the exact middle row in Lemma \ref{lemma_WhiteheadExactness}
that the polarization form of the quadratic form $q\in\operatorname*{Quad}%
(K_{0}(\mathsf{C}),K_{1}(\mathsf{C}))$ is zero. In particular, $q$ is polar;
just pick $t(x,y):=0$ in Definition \ref{def_PolarQuadraticMap}. Now apply
Theorem \ref{thm_Main}. Let $(\beta_{i})_{i\in I}$ be a basis of the
$\mathbb{F}_{2}$-vector space $K_{0}(\mathsf{C})/2K_{0}(\mathsf{C})$. We have
$\overline{q}=q$ (in the notation of the cited theorem) since $t$ vanishes, so
the symmetric $3$-cocycle%
\[
h(x,y,z):=0\qquad\qquad c(x,y):=\sum_{i\in I}\overline{x}_{i}\cdot\overline
{y}_{i}\cdot q(\beta_{i})
\]
is a representative of the symmetric cohomology class of $[(h,c)]$. Without
loss of generality, we may assume that this is the representative we had
started with. We next compute the $q(\beta_{i})$ in terms of the original
exact category $\mathsf{C}$. Returning to Lemma \ref{lemma_WhiteheadExactness}%
, we see that $q$ is an $\mathbb{F}_{2}$-linear map $K_{0}(\mathsf{C}%
)/2K_{0}(\mathsf{C})\rightarrow\left.  _{2}K_{1}(\mathsf{C})\right.  $. As
$K_{0}(\mathsf{C})$ is the algebraic group completion of the monoid of
isomorphism classes of objects in $\mathsf{C}$, all its elements have the
shape $[X]-[Y]$ with $X,Y\in\mathsf{C}$ objects. In particular, for each
$\beta_{i}$ pick%
\begin{equation}
\beta_{i}=\overline{[X]-[Y]}, \label{l1d}%
\end{equation}
where $X,Y\in\mathsf{C}$ are objects. We have $q(\beta_{i})=q([X])-q([Y])$
since $q$ is linear, and $q(x)=c(x,x)$, meaning that%
\[
q([X])=s_{X,X}:\mathcal{D}(X)\otimes\mathcal{D}(X)\longrightarrow
\mathcal{D}(X)\otimes\mathcal{D}(X)\text{,}%
\]
referring to the symmetry constraint of the virtual objects $V(\mathsf{C})$.
In order to compute this map $s_{X,X}$, we can rely on%
\[%
\xymatrix{
& \mathcal{D}(X \oplus X) \ar[dl]_{\mathcal{D}(\Sigma_1)}
\ar[dr]^{\mathcal{D}(\Sigma_2)} \\
\mathcal{D}(X) \otimes\mathcal{D}(X) \ar[rr]_{s_{X,X}}
& & \mathcal{D}(X) \otimes\mathcal{D}(X)
}%
\]
(which is Diagram \ref{lz1a} in the special case of using the same object).
The bottom arrow is the one we need, but the upper arrows come from
$\mathcal{D}(\Sigma_{1})$ resp. $\mathcal{D}(\Sigma_{2})$, still in the
notation loc. cit., which only differ by the swapping isomorphism in the
middle term of $X\hookrightarrow X\oplus X\twoheadrightarrow X$, and the
identity map on the two outer copies of $X$. In the attached homotopies, as in
Equation \ref{l1}, also all vertices agree, and all outer edges agree, and
only the three inner edges are different, because they differ exactly by the
swapping of the two copies of $X$.\ For any object $X\in\mathsf{C}$, the
swapping map%
\[
X\oplus X\longrightarrow X\oplus X\text{,}\qquad x_{1}\oplus x_{2}\mapsto
x_{2}\oplus x_{1}%
\]
is an automorphism, and thus functorially induces an automorphism of
$\mathcal{D}(X\oplus X)$, meaning a homotopy, and this homotopy agrees with
the one of $\mathcal{D}(\Sigma_{2})^{-1}\circ\mathcal{D}(\Sigma_{1})$. It
follows that $s_{X,X}\in K_{1}(\mathsf{C})$ is just the automorphism attached
to the swapping map of $X\oplus X$.

This finishes describing $\mathcal{T}(\pi_{0}V(\mathsf{C}),\pi_{1}%
V(\mathsf{C}),(0,c))$ in detail. Next, we make the composition%
\[
\det\nolimits^{\operatorname*{strict}}:\mathsf{C}^{\times}\longrightarrow
V(\mathsf{C})\overset{\sim}{\longrightarrow}\mathcal{T}(\pi_{0}V(\mathsf{C}%
),\pi_{1}V(\mathsf{C}),(h,c))
\]
a determinant functor. It is already a functor, and is taking values in a
skeletal and strictly associative Picard groupoid. Explicitly:

\begin{observe}
The functor $\det\nolimits^{\operatorname*{strict}}$ has the following properties:

\begin{enumerate}
\item on the level of objects, it sends $X\in\mathsf{C}$ to its $K_{0}$-class
$[X]\in K_{0}(\mathsf{C})$,

\item on the level of automorphisms $\varphi:X\rightarrow X$, it sends
$\varphi$ to its $K_{1}$-class $[X,\varphi]\in K_{1}(\mathsf{C})$.
\end{enumerate}
\end{observe}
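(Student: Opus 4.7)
The plan is to unpack the composition defining $\det\nolimits^{\operatorname*{strict}}$ and match each of the two claimed formulas with the prescribed data. By construction, $\det\nolimits^{\operatorname*{strict}}$ is Deligne's universal determinant functor $\mathcal{D}$ followed by the skeletalization equivalence $V(\mathsf{C})\overset{\sim}{\longrightarrow}\mathcal{T}(K_{0}(\mathsf{C}),K_{1}(\mathsf{C}),(0,c))$ set up in Equation \ref{lam} (with $(h,c)$ replaced by the cohomologous representative $(0,c)$ furnished by Theorem \ref{thm_Main}).

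The object-level statement is immediate. Deligne's functor sends $X$ to the loop $[X]$ of Equation \ref{lal1}, and the skeletalization then passes to its isomorphism class. Under the canonical identification $\pi_{0}V(\mathsf{C})\cong K_{0}(\mathsf{C})$, coming from $\pi_{1}(|N_{\bullet}Q\mathsf{C}|)=K_{0}(\mathsf{C})$, the homotopy class of the loop $[X]$ is exactly the $K_{0}$-class $[X]$, so this part is a direct unwinding of definitions.

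For an automorphism $\varphi\colon X\rightarrow X$, Deligne's functor produces the based homotopy of loops from $[X]$ to itself drawn in his theorem, which is by definition an element of $\operatorname*{Aut}_{V(\mathsf{C})}([X])$. The skeletalization identifies the automorphism group of any object with $\pi_{1}V(\mathsf{C})$, canonically equal to $K_{1}(\mathsf{C})$ via Quillen's $\pi_{2}(|N_{\bullet}Q\mathsf{C}|)=K_{1}(\mathsf{C})$. The task therefore reduces to showing that the element of $K_{1}(\mathsf{C})$ so obtained is the standard class $[X,\varphi]$. I would handle this by invoking a known presentation of $K_{1}$ of an exact category by pairs (object, automorphism) modulo exact-sequence relations, following for instance Bass, Gillet, or Nenashev, and checking that under such a presentation the generator $[X,\varphi]$ corresponds precisely to the $2$-spherical family in $|N_{\bullet}Q\mathsf{C}|$ assembled from Deligne's four $2$-simplices.

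The main obstacle is this last identification: carefully tracking the $\pi_{2}$-class coming from Deligne's homotopy against the algebraic generator $[X,\varphi]$ in a fixed presentation of $K_{1}(\mathsf{C})$. Every step is canonical, but one has to pin down conventions for the $\pi_{2}$-to-$K_{1}$ isomorphism explicitly, and conjugate the automorphism of $[X]$ onto the chosen representative of its isomorphism class in the skeletal target. Once this bookkeeping is settled, the rest of the verification is essentially formal.
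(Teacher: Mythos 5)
Your reduction is the same as the paper's: the paper treats this Observation as a pure unwinding of the composition $\mathsf{C}^{\times}\rightarrow V(\mathsf{C})\rightarrow\mathcal{T}(\pi_{0}V(\mathsf{C}),\pi_{1}V(\mathsf{C}),(0,c))$, noting on objects the identification $\pi_{0}V(\mathsf{C})\cong K_{0}(\mathsf{C})$ and on automorphisms the identification $\pi_{1}V(\mathsf{C})\cong K_{1}(\mathsf{C})$, and for the latter it simply cites the canonical construction of the class $[X,\varphi]\in K_{1}(\mathsf{C})$ attached to an automorphism (Weibel, Chapter IV, Exercise 8.7 / Example 9.6.2); no further argument is given. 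Where you diverge is in how you propose to pin down that last identification: you want to run it through a presentation of $K_{1}$ of an exact category by pairs (object, automorphism) modulo exact-sequence relations. That specific tool is not available in the stated generality. Bass-style automorphism presentations exist for $P_{f}(R)$, and Nenashev's presentation of $K_{1}$ of an exact category is by double short exact sequences, not by automorphisms; indeed, automorphism classes need not even generate $K_{1}(\mathsf{C})$ for a general exact category --- the paper itself points out that generation by such classes is special to cases like $P_{f}(R)$. Fortunately you do not need a presentation at all: the statement only asserts that the element produced is the canonical class $[X,\varphi]$, so all that is required is the definition of the canonical map $\operatorname{Aut}_{\mathsf{C}}(X)\rightarrow K_{1}(\mathsf{C})$ and the observation that Deligne's prescription of $\mathcal{D}(\varphi)$ as a homotopy of loops in $N_{\bullet}Q\mathsf{C}$ is exactly how that map is realized in the $Q$-construction model of $\tau_{\leq 1}K(\mathsf{C})$ (together with the harmless conjugation onto the chosen skeletal representative, which does not change the class since $\pi_{1}$ acts trivially on itself). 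So the conclusion and the essential content of your argument are right, but you should replace the appeal to a presentation by the direct citation of the canonical automorphism-to-$K_{1}$ map; as written, that step leans on a result that does not exist for general $\mathsf{C}$, and it is also more than the Observation asks for.
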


Here we use that $K_{0}(\mathsf{C})$ has the well-known explicit description
of being the free abelian group on isomorphism classes of objects in
$\mathsf{C}$ (and quotient out $[X]=[X^{\prime}]+[X^{\prime\prime}]$ whenever
an exact sequence $X^{\prime}\hookrightarrow X\twoheadrightarrow
X^{\prime\prime}$ exists); and that $K_{1}(\mathsf{C})$ contains canonical
elements attached to automorphisms of objects (see for example \cite[Chapter
IV, Exercise 8.7, or Example 9.6.2]{MR3076731}). If $\mathsf{C}=P_{f}(R)$ is
the category of finitely generated projective modules over a ring $R$, the
group $K_{1}$ is actually generated by the classes coming from such
automorphisms (this can also be used to give a presentation of the $K_{1}%
$-group; there is a discussion of this in \cite{MR1637539}).

We do not get a good description of morphisms on all of $\mathsf{C}^{\times}$,
since isomorphisms between different objects will be sent to the class of the
automorphism gotten once having pre- and post-composed the concrete choices of
isomorphisms between the objects of $\mathsf{C}$ and our choices tacitly made
when picking a skeleton earlier, so this will not admit a choice-free
description. Certainly, if one only needs to do a computation on finitely many
objects, all this data can be chosen concretely and unravelled.

To promote $\det\nolimits^{\operatorname*{strict}}:\mathsf{C}^{\times
}\rightarrow\mathcal{T}(\pi_{0}V(\mathsf{C}),\pi_{1}V(\mathsf{C}),(0,c))$ to a
genuine determinant functor, it only remains to transport the datum
$\mathcal{D}(\Sigma)$ attached to exact sequences $\Sigma$. However, given
$\Sigma:G^{\prime}\hookrightarrow G\twoheadrightarrow G^{\prime\prime}$ in
$\mathsf{C}$, then under the equivalence of Equation \ref{lam} in the
isomorphism%
\[
\mathcal{D}(\Sigma):\mathcal{D}(G)\overset{\sim}{\longrightarrow}%
\mathcal{D}(G^{\prime})\underset{\mathsf{P}}{\otimes}\mathcal{D}%
(G^{\prime\prime})
\]
both sides are the same because the Picard groupoid is skeletal. Thus,
$\mathcal{D}(\Sigma)$ really only defines an automorphism of an object after
going all to $\mathcal{T}(\pi_{0}V(\mathsf{C}),\pi_{1}V(\mathsf{C}),(0,c))$.
Such an automorphism is an element $B_{\Sigma}\in K_{1}(\mathsf{C})$. Equation
\ref{l2} now translates to the cocycle type identity%
\[
B_{G_{2}\hookrightarrow G_{3}\twoheadrightarrow G_{3}/G_{2}}-B_{G_{1}%
\hookrightarrow G_{3}\twoheadrightarrow G_{3}/G_{1}}+B_{G_{1}\hookrightarrow
G_{2}\twoheadrightarrow G_{2}/G_{1}}-B_{G_{2}/G_{1}\hookrightarrow G_{3}%
/G_{1}\twoheadrightarrow G_{3}/G_{2}}=0\text{.}%
\]

\begin{observe}
The functor $\det\nolimits^{\operatorname*{strict}}$ is a universal
determinant functor.
\end{observe}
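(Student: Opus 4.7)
The plan is to factor $\det\nolimits^{\operatorname*{strict}}=E\circ\mathcal{D}$, where $E:V(\mathsf{C})\overset{\sim}{\longrightarrow}\mathcal{T}(\pi_{0}V(\mathsf{C}),\pi_{1}V(\mathsf{C}),(0,c))$ is the braided (in fact symmetric) monoidal equivalence of Equation \ref{lam}, the cocycle representative having been chosen via Theorem \ref{thm_Main} so that the associator component vanishes. Given any Picard groupoid $\mathsf{P}^{\prime}$, the functor to be analysed is
\[
(-)\circ\det\nolimits^{\operatorname*{strict}}:\operatorname*{Hom}\nolimits^{\otimes}(\mathcal{T}(\pi_{0}V(\mathsf{C}),\pi_{1}V(\mathsf{C}),(0,c)),\mathsf{P}^{\prime})\longrightarrow\det(\mathsf{C},\mathsf{P}^{\prime}),
\]
and the strategy is to exhibit it as a composite of two equivalences already in hand.

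For the first factor I would invoke the general $2$-categorical fact that pre-composition with a braided monoidal equivalence induces an equivalence on the corresponding functor categories:
\[
E^{\ast}:\operatorname*{Hom}\nolimits^{\otimes}(\mathcal{T}(\pi_{0}V(\mathsf{C}),\pi_{1}V(\mathsf{C}),(0,c)),\mathsf{P}^{\prime})\overset{\sim}{\longrightarrow}\operatorname*{Hom}\nolimits^{\otimes}(V(\mathsf{C}),\mathsf{P}^{\prime}),\qquad\varphi\mapsto\varphi\circ E.
\]
A quasi-inverse is given by pre-composition with any chosen braided monoidal quasi-inverse $E^{-1}$, together with the coherence modifications witnessing $E\circ E^{-1}\simeq\operatorname{id}$ and $E^{-1}\circ E\simeq\operatorname{id}$ inside the $2$-category $\mathcal{BCG}$ of \S\ref{sect_JoyalStreetClassif}. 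For the second factor I would directly invoke Deligne's theorem, which asserts that
\[
(-)\circ\mathcal{D}:\operatorname*{Hom}\nolimits^{\otimes}(V(\mathsf{C}),\mathsf{P}^{\prime})\overset{\sim}{\longrightarrow}\det(\mathsf{C},\mathsf{P}^{\prime})
\]
is itself an equivalence of categories.

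Composing these two equivalences produces a functor whose value on a braided monoidal $\varphi$ is $\varphi\circ E\circ\mathcal{D}=\varphi\circ\det\nolimits^{\operatorname*{strict}}$, which is precisely the comparison functor from Definition \ref{def_UnivDetFunctor}. Hence $\det\nolimits^{\operatorname*{strict}}$ is a universal determinant functor. The only piece worth explicit (but routine) verification is that post-composing a determinant functor with a braided monoidal functor again yields a determinant functor: the structural data $\mathcal{D}(\Sigma)$ and $z$ transport, and the axioms of Definition \ref{def_DeterminantFunctor}, including Diagrams \ref{l2} and \ref{lz1a}, survive because a braided monoidal functor preserves $\otimes$, the unit, the associator, and the symmetry by definition. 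I do not anticipate any genuine obstacle; the statement amounts to $2$-categorical bookkeeping once Theorem \ref{thm_Main} has produced the strictly associative skeletal model.
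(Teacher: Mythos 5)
Your argument is correct and is essentially the paper's own proof: the paper justifies the observation in one line by saying that $\det\nolimits^{\operatorname*{strict}}$ is obtained from the universal determinant functor on $V(\mathsf{C})$ by post-composition with a symmetric monoidal equivalence, which is exactly your factorization $\det\nolimits^{\operatorname*{strict}}=E\circ\mathcal{D}$ combined with Definition \ref{def_UnivDetFunctor}. You merely spell out the $2$-categorical bookkeeping (pre-composition with $E$ being an equivalence of $\operatorname*{Hom}^{\otimes}$-categories) that the paper leaves implicit.
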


This holds because we constructed it from a universal one by symmetric
monoidal equivalence (Definition \ref{def_UnivDetFunctor}). Summarizing, this
shows that at least abstractly there is a strict model for Deligne's universal
determinant functor. Of course, in concrete terms, picking a genuine skeleton
of $V(\mathsf{C})$ can be hard or impossible, depending on $\mathsf{C}$. For
$\mathsf{C}=\mathsf{Vect}_{f}(k)$ being finite-dimensional vector spaces over
a field, it can be done. Just sketching this, for each $n\in K_{0}%
(k)=\mathbb{Z}$ pick the object $X:=k^{n}$ if $n\geq0$ and $k^{-n}$ for $n<1$,
along with the standard basis. Then for an arbitrary finite-dimensional vector
space picking a basis amounts to making the isomorphism to some $k^{n}$
explicit. The symmetry constraint can be computed using Equation \ref{l1d}.
One gets the standard Koszul type sign of the determinant line by observing
that the matrix of the swapping map goes to $+1$ or $-1$ in $K_{1}$, depending
on the rank (see \cite[Chapter III, Example 1.2.1]{MR3076731}). Finally,
working out the $B_{\Sigma}\in K_{1}(\mathsf{C})$ yields exactly the
well-known rules for the top exterior power of the usual graded determinant
line (restricted to this skeleton).

\bibliographystyle{amsalpha}
\bibliography{ollinewbib}

\end{document}